\def\th@plain{%
  \upshape 
}
\renewenvironment{proof}[1][\proofname]{\par
  \pushQED{\qed}%
  \normalfont \topsep6\p@\@plus6\p@\relax
  \trivlist
  \item[\hskip\labelsep
        \bfseries
    #1\@addpunct{.}]\ignorespaces
}{%
  \popQED\endtrivlist\@endpefalse
}
\newtheorem{theorem}{Theorem}[section]
\newtheorem{lemma}[theorem]{Lemma}
\newtheorem{corollary}[theorem]{Corollary}
\newtheorem{case}{Case}
\theoremstyle{definition}
\newtheorem{proposition}{Proposition}
\newcommand{\etal}{et~al.\ }
\def\int(#1){\mathrm{int}(#1)}
\def\ext(#1){\mathrm{ext}(#1)}
\def\Int(#1){\mathrm{Int}(#1)}
\def\Ext(#1){\mathrm{Ext}(#1)}
\renewcommand{\emph}{\textbf}
\newcommand{\drawnode}[1]{\node[circle, inner sep = 1.2, fill, draw] () at (#1) {};}
\title{Planar graphs without $5^{-}$-cycles at distance less than $3$ are $(\mathcal{I}, \mathcal{F})$-colorable}
\author{Zhen He\footnote{School of Mathematics and Statistics, Henan University, Kaifeng, 475004, P. R. China} \and Tao Wang\footnote{Center for Applied Mathematics, Henan University, Kaifeng, 475004, P. R. China. {\tt Corresponding
author: wangtao@henu.edu.cn; https://orcid.org/0000-0001-9732-1617} } \and Xiaojing Yang\footnote{School of Mathematics and Statistics, Henan University, Kaifeng, 475004, P. R. China}}
\begin{document}
\date{}
\maketitle

\begin{abstract}
A graph is $(\mathcal{I}, \mathcal{F})$-colorable if its vertex set can be partitioned into two subsets, one of which is an independent set, and the other induces a forest. In this paper, we prove that every planar graph without $5^{-}$-cycles at distance less than $3$ is $(\mathcal{I}, \mathcal{F})$-colorable. 
\end{abstract}

\section{Introduction}
\label{sec:1}

An \emph{$(\mathcal{I}, \mathcal{F})$-coloring}, or \emph{\textit{IF}-coloring} for short, of a graph is a vertex coloring $\phi$ using the colors $I$ and $F$ such that $\mathcal{I} \coloneqq \{v \mid \phi(v) = I\}$ forms an independent set and $\mathcal{F} \coloneqq \{v \mid \phi(v) = F\}$ induces a forest. A graph is \emph{$(\mathcal{I}, \mathcal{F})$-partitionable}, or \emph{$(\mathcal{I}, \mathcal{F})$-colorable}, if the vertex set can be partitioned into two parts where one forms an independent set, and the other induces a forest. Borodin and Glebov \cite{MR1918259} proved that every planar graph with girth at least $5$ is $(\mathcal{I}, \mathcal{F})$-colorable. The result was further extended by Kawarabayashi and Thomassen \cite{MR2518200}. Furthermore, if we require the maximum degree of $\mathcal{F}$ is bounded by an integer $d$, we can define the concept of $(\mathcal{I}, \mathcal{F}_{d})$-coloring. Dross, Montassier and Pinlou \cite{MR3785024} considered the $(\mathcal{I}, \mathcal{F}_{d})$-colorability of graphs with bounded maximum average degree. Chen \etal \cite{MR3759461} strengthened the result by showing that every graph with maximum average degree at most $2 + \frac{d}{d+1}$ admits an $(\mathcal{I}, \mathcal{F}_{d})$-coloring. Cranston and Yancey \cite{MR4134028} considered the $(\mathcal{I}, \mathcal{F}_{d})$-colorability of graphs with bounded maximum average degree and some forbidden configurations. 

Let $C$ be a cycle in $G$. A chord of $C$ is a \emph{$(l_{1}, l_{2})$-chord} if the chord split $C$ into two cycles of lengths $l_{1}$ and $l_{2}$. If there exists a vertex $v \notin V(C)$ with three neighbors $v_{1}, v_{2}$ and $v_{3}$ on $C$, then we call $G[\{vv_{1}, vv_{2}, vv_{3}\}]$ a claw of $C$, see \autoref{claw}. A cycle and one of its claws form three cycles of lengths $c_{1}, c_{2}$ and $c_{3}$, we say that the claw is a \emph{$(c_{1}, c_{2}, c_{3})$-claw}. A $k$-cycle is a cycle of length $k$. A $9$-cycle is defined as \emph{special} if it has a $(3, 8)$-chord or a $(5, 5, 5)$-claw. Let $\mathcal{G}$ denote the family of connected planar graphs without $4$-, $6$-cycles, and special $9$-cycles. Recently, Kang, Lu and Jin \cite{arXiv:2303.04648} proved the following result regarding $(\mathcal{I}, \mathcal{F})$-coloring.
\begin{theorem}[Kang, Lu and Jin \cite{arXiv:2303.04648}]\label{IF-Kang}
Every graph in $\mathcal{G}$ is $(\mathcal{I}, \mathcal{F})$-colorable.
\end{theorem}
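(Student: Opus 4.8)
The plan is to run the standard reducibility-plus-discharging argument for $(\mathcal{I},\mathcal{F})$-colouring of sparse planar graphs. First I would suppose the statement fails and take a counterexample $G\in\mathcal{G}$ that minimises $|V(G)|+|E(G)|$. Since an $(\mathcal{I},\mathcal{F})$-colouring of a disconnected graph is just the union of $(\mathcal{I},\mathcal{F})$-colourings of its components, and since deleting vertices or edges keeps a graph planar and free of $4$-cycles, $6$-cycles and special $9$-cycles, every proper subgraph of $G$ is $(\mathcal{I},\mathcal{F})$-colourable; I use this minimality repeatedly. A quick first reduction is $\delta(G)\ge 3$: a vertex $v$ of degree at most $2$ can be deleted, $G-v$ coloured by minimality, and then $v$ placed in $\mathcal{F}$ whenever it has at most one neighbour already in $\mathcal{F}$ (only one new edge joins the forest, at a new leaf) and in $\mathcal{I}$ when both of its neighbours lie in $\mathcal{F}$ (those neighbours are then outside $\mathcal{I}$). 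The usual local reductions also let me take $G$ to be $2$-connected, so each face is bounded by a cycle and, as $G$ has no $4$- or $6$-cycle, $G$ has no $4$- or $6$-face; moreover, no two triangles of $G$ share an edge.

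Next I would compile a list of \emph{reducible configurations}, each disposed of by the same template: assume $G$ contains the configuration; delete a small, carefully chosen vertex set $S$ (typically a low-degree vertex on a triangle, perhaps together with a neighbour); $(\mathcal{I},\mathcal{F})$-colour $G-S$ by minimality; and show the colouring extends to $S$, a contradiction. The extension is where the work lives: one keeps the partial forest acyclic and the partial independent set independent while assigning colours to the few vertices of $S$, by a short case analysis on the colours their neighbours received, sometimes recolouring an auxiliary vertex, occasionally phrased as a greedy or Hall-type argument that every vertex of $S$ retains a legal colour. I expect the list to contain: a $3$-vertex on a triangle whose other two vertices are also of small degree; a $3$-vertex all of whose neighbours have degree $3$; two triangles joined by an edge or by a path of degree-$3$ vertices; and configurations that a single extra edge or identification would complete into a special $9$-cycle --- these last being precisely the patterns the special-$9$-cycle hypothesis is present to kill in the discharging phase.

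For the global contradiction I would discharge. Assign to each vertex $v$ the charge $\mu(v)=d(v)-4$ and to each face $f$ the charge $\mu(f)=d(f)-4$; by Euler's formula the total charge of a connected plane graph is $\sum_{v}(d(v)-4)+\sum_{f}(d(f)-4)=4|E|-4(|V|+|F|)=-8<0$. The only objects of negative charge are $3$-vertices and $3$-faces, each with charge $-1$; $4$-vertices have charge $0$; and all other vertices and faces (recall $G$ has no $4$- or $6$-face) carry positive charge. The reducibility lemmas ensure that every $3$-vertex and every $3$-face has enough large vertices and long faces nearby. I would then choose discharging rules --- long faces send prescribed amounts across incident edges or to incident small vertices and triangles; large vertices send to incident triangles and to neighbouring $3$-vertices --- so that after redistribution the final charge $\mu^{*}(x)$ is nonnegative for every vertex and face. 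Summing then yields $0\le\sum\mu^{*}=\sum\mu=-8$, the required contradiction, so no counterexample exists.

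The hard part will be the discharging bookkeeping around triangles. With $4$- and $6$-cycles forbidden but $5$-cycles and longer triangle-bearing patterns permitted, a triangle can be charge-poor in several genuinely different ways, and in each one must certify from the configuration list that a suitable donor lies nearby; the special-$9$-cycle hypothesis is exactly the tool that removes the last and tightest of these families. Making the charges, the discharging rules, and the list of reducible configurations interlock --- so that no configuration is simultaneously irreducible and charge-deficient --- is the delicate heart of the proof, and is where I expect essentially all of the effort to go; a poor choice of charges or rules there would force reopening the reducibility analysis.
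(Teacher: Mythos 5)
There is a genuine gap --- in fact, two. First, a structural point: the statement you are proving is not established in this paper at all; it is quoted from Kang, Lu and Jin, so there is no internal proof to match, and your text must stand on its own. It does not: it is a plan, not a proof. The list of reducible configurations is only guessed (``I expect the list to contain\dots''), no extension argument is actually carried out for any configuration, no discharging rules are specified, and the verification that every vertex and face ends with nonnegative charge --- which you yourself call ``the delicate heart of the proof'' --- is entirely deferred. A referee cannot check a proof whose central lemmas are placeholders.

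Second, and more substantively, the naive template ``delete a small set $S$, colour $G-S$ by minimality, extend by a short local case analysis'' is exactly where $(\mathcal{I},\mathcal{F})$-colouring differs from ordinary or DP colouring: membership in $\mathcal{F}$ is constrained by \emph{global} acyclicity. If two neighbours of a deleted vertex are both coloured $F$ and are already joined by an $F$-path in $G-S$, assigning $F$ to that vertex closes an $F$-cycle, and no case analysis on the colours of the neighbours alone can detect or repair this; similarly, gluing colourings across a separating cycle can create $F$-cycles through both sides. This is precisely why the present paper (and the Kang--Lu--Jin argument it cites) works with a strengthened induction statement --- an extension theorem for a precoloured ``good'' cycle bounding the outer face, with the extra requirement that no splitting $F$-paths exist (the ``super \textit{IF}-colouring'' of Theorem~\ref{SuperColorable}), which is what makes the local nice-colouring steps and the cut-and-paste along separating cycles legitimate. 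Your minimal-counterexample setup, with plain $(\mathcal{I},\mathcal{F})$-colourability as the induction hypothesis and charges $d-4$, omits this strengthening, so even if you supplied the missing configurations and rules, the extension steps would not go through as described; you would need to reformulate the theorem as a precoloured-boundary extension result (or an equivalent strengthening) before the reducibility arguments can be made rigorous.
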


The following two corollaries are immediate consequence of \autoref{IF-Kang}.
\begin{corollary}[Liu and Yu \cite{MR4115511}]\label{Liu-Yu}
Every planar graph without $4$-, $6$-, and $8$-cycles is $(\mathcal{I}, \mathcal{F})$-colorable.
\end{corollary}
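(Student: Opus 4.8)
The plan is to derive this corollary directly from \autoref{IF-Kang}. First I would observe that being $(\mathcal{I}, \mathcal{F})$-colorable is preserved by disjoint unions: a graph is $(\mathcal{I}, \mathcal{F})$-colorable if and only if each of its connected components is (an independent set, respectively induced forest, in each component assembles into one in the union). Hence it suffices to treat a \emph{connected} planar graph $G$ with no $4$-, $6$-, or $8$-cycles, and the goal reduces to showing $G \in \mathcal{G}$, that is, that $G$ additionally contains no special $9$-cycle; the conclusion then follows at once from \autoref{IF-Kang}.

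So suppose for contradiction that $G$ contains a special $9$-cycle $C$. By definition, $C$ has a $(3,8)$-chord or a $(5,5,5)$-claw. In the first case the $(3,8)$-chord splits $C$ into a $3$-cycle and an $8$-cycle, so $G$ contains an $8$-cycle, a contradiction. In the second case, let $v \notin V(C)$ be the center of the claw with neighbors $v_1, v_2, v_3 \in V(C)$, and let $a, b, c$ be the lengths of the three arcs into which $v_1, v_2, v_3$ partition $C$, so $a+b+c = 9$. The three cycles of the claw have lengths $a+2$, $b+2$, $c+2$, and since it is a $(5,5,5)$-claw we get $a = b = c = 3$. But then the union of the arcs of lengths $a$ and $b$ is a path of length $a+b = 6$ joining two of the $v_i$ through the third, and appending the two claw edges at its endpoints produces a cycle of length $6+2 = 8$ in $G$ — again contradicting the absence of $8$-cycles. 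Hence $G$ has no special $9$-cycle, so $G \in \mathcal{G}$, and \autoref{IF-Kang} gives that $G$ is $(\mathcal{I}, \mathcal{F})$-colorable; since this holds for every connected graph without $4$-, $6$-, $8$-cycles, it holds for all such planar graphs.

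I do not expect any real obstacle here — this is a genuine corollary. The only point requiring a moment's thought is the $(5,5,5)$-claw case: one must notice that such a claw on a $9$-cycle, besides creating three harmless $5$-cycles, also creates an $8$-cycle by concatenating two of its arcs, and it is precisely this $8$-cycle that is excluded by hypothesis. The $(3,8)$-chord case and the reduction to connected graphs are immediate.
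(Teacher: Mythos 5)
Your proposal is correct and follows exactly the route the paper intends: the paper states this corollary as an immediate consequence of \autoref{IF-Kang} without written detail, and your argument simply supplies that detail (reduction to components, the $(3,8)$-chord yielding an $8$-cycle, and the $(5,5,5)$-claw yielding an $8$-cycle from two arcs plus two claw edges). The arithmetic in the claw case ($a=b=c=3$, giving a cycle of length $6+2=8$) is right, so there is nothing to correct.
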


\begin{corollary}[Kang, Lu and Jin \cite{arXiv:2303.04648}]
Every planar graph without $4$-, $6$-, and $9$-cycles is $(\mathcal{I}, \mathcal{F})$-colorable.
\end{corollary}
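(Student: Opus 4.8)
The plan is to deduce this corollary immediately from \autoref{IF-Kang}, essentially by observing that every connected planar graph with no $4$-, $6$-, or $9$-cycle already belongs to the family $\mathcal{G}$.

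First I would dispose of the connectivity assumption built into \autoref{IF-Kang}. Let $G$ be a planar graph containing no $4$-, $6$-, or $9$-cycle. It suffices to produce an $(\mathcal{I}, \mathcal{F})$-coloring of each connected component of $G$ separately and then take the union: since $G$ has no edge joining distinct components, the union of the independent parts is still an independent set, and the union of the induced forests is a disjoint union of forests, hence again a forest. So we may assume $G$ is connected.

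Next I would verify that $G \in \mathcal{G}$. By hypothesis $G$ is connected and planar and contains no $4$-cycle and no $6$-cycle, so the only remaining membership condition is that $G$ has no special $9$-cycle. But a special $9$-cycle is by definition a $9$-cycle (one carrying a $(3,8)$-chord or a $(5,5,5)$-claw), while $G$ has no $9$-cycle whatsoever; hence $G$ has no special $9$-cycle and therefore $G \in \mathcal{G}$. Applying \autoref{IF-Kang} then gives that $G$ is $(\mathcal{I}, \mathcal{F})$-colorable, and reassembling the colorings of the components (as above) finishes the proof.

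There is no genuine obstacle here: once \autoref{IF-Kang} is available, the corollary is a one-line containment of graph classes, and the only point requiring (minimal) care is the reduction of the not-necessarily-connected statement to the connected case handled by the theorem.
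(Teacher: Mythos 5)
Your proposal is correct and matches the paper's intent exactly: the paper states this corollary as an immediate consequence of \autoref{IF-Kang}, precisely because a graph with no $9$-cycle has no special $9$-cycle, so (each component of) such a graph lies in $\mathcal{G}$. Your additional remark on reducing to the connected case is a harmless bit of extra care that the paper leaves implicit.
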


\begin{figure}%
\centering
\def\s{1.5}
\subcaptionbox{\label{claw}A $(c_{1}, c_{2}, c_{3})$-claw.}[0.4\linewidth]{\begin{tikzpicture}
\draw  circle (\s);
\draw[line width = 1pt] (0, 0)--(90:\s);
\draw[line width = 1pt] (0, 0)--(210:\s);
\draw[line width = 1pt] (0, 0)--(330:\s);
\drawnode{0, 0};
\drawnode{90:\s};
\drawnode{210:\s};
\drawnode{330:\s};
\node at (30:0.5*\s) {$c_{1}$};
\node at (150:0.5*\s) {$c_{2}$};
\node at (270:0.5*\s) {$c_{3}$};

\end{tikzpicture}}
\subcaptionbox{\label{triclaw}A $(c_{1}, c_{2}, c_{3})$-claw.}[0.4\linewidth]{\begin{tikzpicture}
\draw circle (\s);
\draw[line width = 1pt] (90:0.4*\s)--(90:\s);
\draw[line width = 1pt] (210:0.4*\s)--(210:\s);
\draw[line width = 1pt] (330:0.4*\s)--(330:\s);
\draw[line width = 1pt] (90:0.4*\s)--(210:0.4*\s)--(330:0.4*\s)--cycle;
\drawnode{90:0.4*\s};
\drawnode{210:0.4*\s};
\drawnode{330:0.4*\s};
\drawnode{90:\s};
\drawnode{210:\s};
\drawnode{330:\s};
\node at (30:0.6*\s) {$c_{1}$};
\node at (150:0.6*\s) {$c_{2}$};
\node at (270:0.6*\s) {$c_{3}$};
\node at (0, 0) {$3$};
\end{tikzpicture}}
\caption{A $(c_{1}, c_{2}, c_{3})$-claw and a $(c_{1}, c_{2}, c_{3})$-triclaw.}
\end{figure}
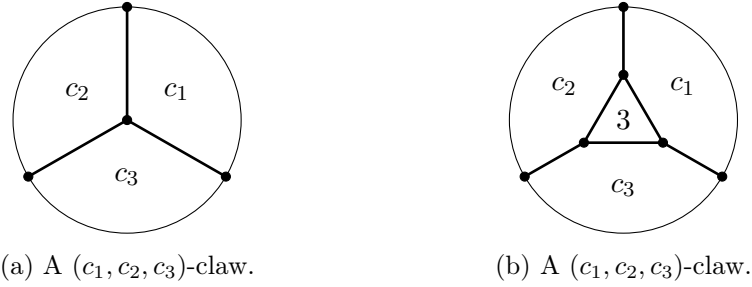

Two cycles are \emph{adjacent} if they share at least one common edge (potentially sharing more than two common vertices). Similarly, two cycles are \emph{normally adjacent} if they share exactly one common edge and exactly two common vertices. Lu \etal \cite{MR4422988} studied planar graphs without certain normally adjacent cycles. A cycle is a $l^{-}$-cycle if its length is at most $k$. Let $\mathscr{G}$ be the class of planar graphs without triangles normally adjacent to $8^{-}$-cycles, without $4$-cycles normally adjacent to $6^{-}$-cycles, and without normally adjacent $5$-cycles. Lu \etal strengthened \autoref{Liu-Yu} to the graph class $\mathscr{G}$. 

\begin{theorem}[Lu \etal \cite{MR4422988}]
Every planar graph in $\mathscr{G}$ is $(\mathcal{I}, \mathcal{F})$-colorable.
\end{theorem}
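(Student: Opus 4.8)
The plan is a minimal-counterexample argument settled by discharging. Suppose the theorem fails and let $G$ be a counterexample minimizing $|V(G)|+|E(G)|$, fixed together with a plane embedding. Since deleting a vertex or an edge neither creates a $5^{-}$-cycle nor decreases the distance between two of them, every proper subgraph of $G$ again has all its $5^{-}$-cycles pairwise at distance at least $3$, and is therefore $(\mathcal{I},\mathcal{F})$-colorable by minimality.

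First I would pin down the structure of $G$. Globally, $G$ is connected and, by a standard argument combining $(\mathcal{I},\mathcal{F})$-colorings of the blocks at a cut vertex, $2$-connected, so that every face is bounded by a cycle. Locally, I would assemble a list of \emph{reducible configurations}, each ruled out by deleting or contracting a bounded subgraph $H$, taking an $(\mathcal{I},\mathcal{F})$-coloring of the resulting smaller graph (still in our class), and extending it over $H$. Two facts drive these extensions. A vertex of degree at most $2$ is always reducible — a leaf is placed in $\mathcal{F}$, and a degree-$2$ vertex in $\mathcal{F}$ unless its two neighbors already lie in a common tree of $\mathcal{F}$, in which case it is placed in $\mathcal{I}$ — so $\delta(G)\ge 3$. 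And this is where the hypothesis enters: around a $5^{-}$-cycle the distance condition forces the $2$-neighborhood to be cycle-sparse, so whenever the extension must move a vertex into $\mathcal{F}$ there is enough acyclic room to avoid closing a cycle. The precise configurations — a $3$-vertex adjacent to another low-degree vertex, a short face carrying too many $3$-vertices, a short cycle with a short chord or a pendant triangle, and so on — are exactly those the discharging step will demand.

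For the discharging, set $\mu(x)=d(x)-4$ for each vertex and face $x$; by Euler's formula $\sum_{x}\mu(x)=-8$. Since $\delta(G)\ge 3$, only $3$-vertices and $3$-faces start negative, each with charge $-1$. The hypothesis supplies the key structural fact: every face of length at most $5$ is \emph{isolated}, in the sense that every face within distance $2$ of it has length at least $6$ (hence charge at least $+2$), every vertex on a short face lies on no other short face and hence meets at least two faces of length $\ge 6$, and in particular a large face is adjacent to at most one short cycle. I would then let faces of length at least $6$ send charge to their incident $3$-vertices and to nearby short faces, calibrating the rates so that a $6$-face stays nonnegative (its surplus of $2$ covers, say, a $1/3$ share to each incident $3$-vertex) while every $3$-vertex and every $3$-face reaches charge $0$; the reducible configurations above are precisely what bounds the number of needy $3$-vertices that a single large face must support. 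Then $\mu^{*}(x)\ge 0$ for all $x$ contradicts $\sum_{x}\mu^{*}(x)=-8$.

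The main obstacle is closing the accounts in this last step. A $6$-face has only $+2$ to spend yet meets up to six vertices, and a $5$-face has only $+1$ and — being itself a short cycle — sits wholly among large faces it need not help; reconciling these pressures forces a fairly intricate list of reducible configurations governing how $3$-vertices and short faces may cluster, together with the matching discharging rules, and engineering that interplay is essentially the content of the proof. The $2$-connectivity reduction and the case analysis for extending a coloring across a deleted short cycle are comparatively routine, though the latter is where planarity and the distance-$3$ hypothesis actually do their work.
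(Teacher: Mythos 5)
Your argument is aimed at the wrong hypothesis. The statement to be proved is the theorem of Lu, Rao, Wang and Wang about the class $\mathscr{G}$ of planar graphs with no triangle normally adjacent to an $8^{-}$-cycle, no $4$-cycle normally adjacent to a $6^{-}$-cycle, and no two normally adjacent $5$-cycles; the paper only quotes this result from \cite{MR4422988} and gives no proof of it. From your first sentence on, however, you work with the condition that all $5^{-}$-cycles are pairwise at distance at least $3$, which is the hypothesis of \autoref{Colorable}, not of this theorem, and the two conditions are incomparable. In particular your ``key structural fact''---that every $5^{-}$-face is isolated, that every face within distance $2$ of it has length at least $6$, and that every vertex lies on at most one short face---is false for graphs in $\mathscr{G}$: such graphs may contain two triangles sharing a vertex, two $5$-cycles meeting in exactly one vertex, a triangle sharing an edge with a $9$-cycle, or a $4$-cycle sharing an edge with a $7$-cycle. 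So the discharging scheme you outline has no foundation for the stated theorem; making it work would require structural lemmas tailored to the normal-adjacency conditions, not to the distance condition.

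Even read charitably as a sketch for the distance-$3$ theorem, there is a genuine gap beyond the deferred case analysis. With plain $(\mathcal{I},\mathcal{F})$-colorings, deleting a configuration, coloring the smaller graph by minimality, and recoloring the deleted vertices is not sound as described: vertices of the configuration that you place in $\mathcal{F}$ may be joined by an $F$-path running through the rest of the graph, closing an $F$-cycle that is invisible locally, and the vertex identifications your ``contracting'' step needs must be shown not to create new $5^{-}$-cycles or chords. This is precisely why both \cite{MR4422988} and the present paper strengthen the induction hypothesis: they prove that any precoloring of a good $9^{-}$-cycle extends to a super \textit{IF}-coloring, one with no splitting $F$-paths (\autoref{SuperColorable}), and the reducibility arguments (e.g.\ \autoref{Y} and \autoref{FACE}) lean on that extra property and on separating-cycle control. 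Your sketch has no mechanism for controlling $F$-paths between boundary vertices of a configuration; even the $2$-connectivity reduction is not routine here, since the cut vertex may receive $I$ in one block and $F$ in the other and recoloring $I$ to $F$ can create an $F$-cycle. Finally, the reducible configurations and the verification of nonnegative final charges---which you yourself call the content of the proof---are left unspecified, so nothing is actually established.
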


Let $d^{*}$ denote the minimum distance between two $5^{-}$-cycles. Yin and Yu considered planar graphs with restrictions on the distance of triangles, and proved the following result on DP-$3$-coloring. For the definition of DP-coloring, we refer the reader to \cite{MR3758240,MR3686937}.  
\begin{theorem}[Yin and Yu \cite{MR3954054}]
Every planar graph with $d^{*} \geq 3$ and without 4-, 5-cycles is DP-$3$-colorable. 
\end{theorem}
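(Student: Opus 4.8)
The plan is to argue by contradiction through a minimal counterexample combined with discharging. Suppose the statement fails and let $G$, embedded in the plane, be a counterexample minimizing $|V(G)|+|E(G)|$; thus $G$ carries a $3$-fold cover (correspondence assignment) $\mathcal{H}$ admitting no $\mathcal{H}$-coloring, while every proper subgraph of $G$ is DP-$3$-colorable under the induced cover. Standard reductions let me assume $G$ is $2$-connected, so that every face is bounded by a cycle, and that $G$ has no separating triangle (color the two sides and glue). Because $G$ has no $4$- and no $5$-cycles, it has no $4$- or $5$-faces, so each face is a triangle or has length at least $6$; and because $d^{*}\ge 3$, the only $5^{-}$-cycles are triangles and any two of them lie at distance at least $3$. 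Two edges of one triangle never bound a common face, since the shared vertex would then have degree $2$; hence each triangle is a $3$-face whose three edges border three distinct $6^{+}$-faces.

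Next I would isolate the reducible configurations. The guiding principle for DP-coloring is the extension lemma: a configuration $S$ is reducible if every $\mathcal{H}$-coloring of $G-S$ (which exists by minimality) extends to an $\mathcal{H}$-coloring of $G$; since each already-colored neighbor deletes at most one color from a list of size $3$, this reduces to counting available colors and, on the few constrained pieces (paths and triangles), to a direct check. In this way I expect to obtain $\delta(G)\ge 3$, the fact that sufficiently many neighbors of each $3$-vertex have large degree, and a bound on the number of $3$-vertices a triangle may carry. The DP setting forbids the identification shortcuts available for ordinary $3$-coloring, so each claim must be verified against an arbitrary matching between the lists; this is where correspondence coloring is genuinely more delicate than list coloring, most visibly for a triangle each of whose vertices has one forbidden color (a $C_{3}$ under a $2$-fold cover), which is colorable except in identifiable bad cases that the reducibility must rule out.

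For the discharging I would assign to each $x\in V(G)\cup F(G)$ the charge $\mu(x)=d(x)-4$, so that $\sum_{x}\mu(x)=-8$ by Euler's formula, with the only negative charges sitting on $3$-vertices and $3$-faces. The spacing hypothesis $d^{*}\ge 3$ is the main quantitative lever: on a face $f$ of length $d$, two triangle-edges coming from distinct triangles are separated along the boundary of $f$ by at least three edges, so $f$ bears at most $\lfloor d/4\rfloor$ triangle-edges. I would then send charge from $6^{+}$-faces across their triangle-edges to the incident $3$-faces and to the incident $3$-vertices, tuning the amounts so that a $6^{+}$-face of length $d$ never gives away more than $d-4$; the bound $\lfloor d/4\rfloor$, together with the reducibility-imposed scarcity of $3$-vertices, is exactly what keeps this budget nonnegative. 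Checking that every $3$-vertex and $3$-face ends with nonnegative charge, while every other element keeps nonnegative charge, then contradicts $\sum_{x}\mu(x)=-8$.

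I expect the main obstacle to be the simultaneous handling of the tightest local configurations: $6$-faces, whose surplus is only $2$, that are adjacent to a triangle and also incident to several $3$-vertices, and $3$-vertices lying on a triangle, which see only two $6^{+}$-faces and so cannot collect their full deficit of $1$ from faces alone. Resolving these forces the discharging rules and the list of reducible configurations to be designed together: the reductions must forbid precisely the crowded neighborhoods that would overload a $6$-face, and the charge routed to a triangle must be split so that its few $3$-vertices are subsidized through the triangle's three surrounding large faces. Getting these two moving parts to fit, while repeatedly re-checking reducibility against arbitrary correspondences rather than a single color set, is, I anticipate, the crux of the argument.
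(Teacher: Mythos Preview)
The paper does not prove this theorem. It is quoted as background from Yin and Yu~\cite{MR3954054} and is immediately followed by Zhao's strengthening and then by the paper's own main result (\autoref{Colorable}), which concerns $(\mathcal{I},\mathcal{F})$-colorability, not DP-colorability. There is therefore no ``paper's own proof'' to compare your proposal against.

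That said, your outline is the standard shape such a proof takes in \cite{MR3954054}: a minimal counterexample, a short list of reducible configurations verified by greedy extension under an arbitrary correspondence, and discharging with initial charge $d(x)-4$ in which the $d^{*}\ge 3$ hypothesis bounds how many triangle-edges a long face can carry. Two small remarks on the write-up itself. First, you invoke ``two edges of one triangle never bound a common face, since the shared vertex would then have degree~$2$'' before you have $\delta(G)\ge 3$; move the minimum-degree claim earlier. Second, in the actual proof one does not merely take a vertex-minimal counterexample but works with a precolored short cycle on the outer face and extends inward (this is what lets the argument handle the boundary and is how both \cite{MR3954054} and \cite{Zhao_2020} are organized); your sketch of ``color the two sides and glue'' for separating triangles implicitly uses this extension viewpoint, so you may as well set it up from the start. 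None of this is a genuine gap, but since the paper contains no proof of the cited theorem, there is nothing further to compare.
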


Zhao \cite{Zhao_2020} improved the above theorem to the following. 

\begin{theorem}[Zhao \cite{Zhao_2020}]
Every planar graph with $d^{*} \geq 3$ is DP-$3$-colorable. 
\end{theorem}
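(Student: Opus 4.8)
The plan is to argue by contradiction through the discharging method, the standard route for establishing DP-colorability of sparse planar graphs. Suppose the statement fails and choose a plane graph $G$ with $d^{*} \geq 3$, together with a $3$-fold cover $(L, H)$ admitting no $(L, H)$-coloring, subject to $|V(G)|$ being as small as possible. The first task is to collect the reductions forced by minimality. A vertex of degree at most $2$ is reducible, since after coloring $G - v$ it confronts at most two forbidden colors out of three; hence the minimum degree of $G$ is at least $3$. A standard cut-vertex argument shows that $G$ may be assumed $2$-connected, so every face is bounded by a cycle. In particular every face of length at most $5$ is a $5^{-}$-cycle, and the hypothesis $d^{*} \geq 3$ then forces these short faces to be pairwise vertex-disjoint and to lie at pairwise distance at least $3$; consequently each short face is surrounded entirely by faces of length at least $6$, and no vertex is incident with two short faces.

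The core of the proof is a catalogue of reducible configurations for the minimal counterexample. The principal tool is the DP-coloring analogue of the Gallai-tree theorem: a connected graph carrying a cover in which every vertex has at least as many available colors as its degree admits a coloring unless each of its blocks is a clique or a cycle equipped with a tightly matched cover. I would use this to rule out, among other configurations, a degree-$3$ vertex whose removal leaves a core whose coloring always extends, clusters of low-degree vertices on or around a short cycle, and short cycles all of whose vertices have degree $3$ without adequate external support. The feature peculiar to DP-coloring is that colors cannot be permuted freely, so each reduction must be verified against every admissible matching on the edges joining the configuration to the remainder of $G$. This matching analysis carries most of the casework and is the step I expect to be the main obstacle.

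Finally I would carry out the discharging. Assign to each vertex and each face $x$ the initial charge $\mathrm{ch}(x) = d(x) - 4$, so that Euler's formula yields $\sum_{x} \mathrm{ch}(x) = -8$. The objects of negative charge are precisely the degree-$3$ vertices and the triangular faces, while every face of length at least $6$ holds a surplus of at least $2$. I would devise rules that move charge from large faces and high-degree vertices inward to the isolated short faces and the degree-$3$ vertices. The separation hypothesis is exactly what makes this accounting succeed: because $5^{-}$-cycles cannot cluster, no large face is ever asked to subsidize two distinct short faces at once, and the reducible configurations dispose of the remaining tight cases in which a short face or a degree-$3$ vertex would be deprived of charge. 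Establishing that every vertex and face ends with nonnegative charge contradicts the total $-8$ and proves the theorem. The delicate final estimates---bounding how many degree-$3$ vertices a single large face must support, and confirming that the distance-$3$ buffer always supplies enough---I would settle by a local analysis anchored at each short face.
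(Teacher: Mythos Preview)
The paper does not prove this statement. Theorem~1.5 is quoted from Zhao~\cite{Zhao_2020} purely as background: it motivates the paper's own result, Theorem~1.6, which concerns $(\mathcal{I},\mathcal{F})$-colorability rather than DP-$3$-colorability. There is therefore no ``paper's own proof'' of the DP-$3$-coloring theorem to compare your proposal against.

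As for your sketch on its own merits: the broad architecture (minimal counterexample, reducible configurations, discharging with $d(x)-4$) is the standard template, but what you have written is a plan, not a proof. Two substantive gaps stand out. First, the actual proof in \cite{Zhao_2020}---and the analogous argument this paper carries out for $(\mathcal{I},\mathcal{F})$-coloring in Theorem~1.7---works by proving a \emph{precoloring extension} statement: one fixes a short outer cycle $C_0$, precolors it, and shows the coloring extends. This is not a cosmetic choice; it is what lets you dispose of separating $9^-$-cycles (compare Lemma~\ref{SEP}) and thereby assume every short cycle bounds a face. Your proposal never invokes precoloring extension, so you have no mechanism for reducing along separating cycles, and your claim that ``every face of length at most $5$ is a $5^-$-cycle'' plus $d^*\ge 3$ controls all short cycles does not by itself rule out separating short cycles with nontrivial interior. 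Second, you have not actually produced any reducible configuration beyond $\delta(G)\ge 3$, nor any discharging rule; ``I would devise rules'' and ``the reducible configurations dispose of the remaining tight cases'' are promissory notes, not arguments. The real work---identifying which local patterns near a $5^-$-face force extendability under every DP matching, and balancing the charge on $6$-faces adjacent to $5^-$-faces---is entirely absent.
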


In this paper, we demonstrate that the same family of graphs as in \cite{Zhao_2020} is $(\mathcal{I}, \mathcal{F})$-colorable.
\begin{theorem}\label{Colorable}
Every planar graph with $d^{*} \geq 3$ is $(\mathcal{I}, \mathcal{F})$-colorable.
\end{theorem}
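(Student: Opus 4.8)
The plan is to follow the minimal-counterexample-plus-discharging scheme used for \autoref{IF-Kang} and the DP-$3$-coloring theorems quoted above. Let $G$ be a plane graph with $d^{*}\ge 3$ that is not $(\mathcal{I},\mathcal{F})$-colorable and is minimal, say with $|V(G)|+|E(G)|$ as small as possible, subject to this. Deleting a vertex or an edge never creates a $5^{-}$-cycle and never decreases the distance between two $5^{-}$-cycles, so every proper subgraph of $G$ again meets the hypothesis and hence is $(\mathcal{I},\mathcal{F})$-colorable; this minimality is the engine behind every reducibility claim. A componentwise argument, together with a gluing argument at a cut vertex (which needs some care because of the forest condition: one moves the cut vertex between $\mathcal{I}$ and $\mathcal{F}$, possible unless it already has two $\mathcal{F}$-neighbours in one tree-component of $G[\mathcal{F}]$, a situation one rules out at a cut vertex), shows $G$ is $2$-connected, so every face is bounded by a cycle. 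Finally $\delta(G)\ge 3$: if $d(v)\le 2$, color $G-v$; putting $v$ into $\mathcal{F}$ fails only when $v$ has two $\mathcal{F}$-neighbours in a common component of $G[\mathcal{F}]$, and putting $v$ into $\mathcal{I}$ fails only when some neighbour of $v$ lies in $\mathcal{I}$, so a short inspection of the at most two neighbours of $v$ always finds an available option.

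The core of the argument is a list of forbidden configurations. On the one hand I expect to need configurations that forbid dense pockets of $3$-vertices (bounds ruling out long paths or large stars of $3$-vertices, and bounds on $3$-vertices sharing a $4$- or $5$-face); on the other hand, configurations around a $5^{-}$-cycle forcing it to be joined to the rest of $G$ through enough vertices of degree at least $4$ (for instance: a facial triangle cannot have all three corners of degree $3$; a short cycle cannot carry too many consecutive $3$-vertices). Each is proved by deleting the configuration, or one well-chosen edge of it, coloring the smaller graph by minimality, and extending the coloring. What makes these arguments heavier than the analogous ones for ordinary $3$-coloring is precisely the forest condition: re-inserting a vertex into $\mathcal{F}$ can close an arbitrarily long cycle, so the extension step must track the components of the current forest $G[\mathcal{F}]$, and when two neighbours of the vertex to be colored lie in one tree-component one typically recolors along the tree-path between them, swapping $\mathcal{I}$ and $\mathcal{F}$ on that path to separate them. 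Pinning down exactly the right list of reducible configurations --- strong enough for the counting below, yet restricted to configurations that genuinely reduce --- is, I expect, the principal obstacle.

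Next I would record the geometry forced by $d^{*}\ge 3$: since any two $5^{-}$-cycles are at distance at least $3$, every vertex lies on at most one $5^{-}$-cycle, the boundary of every $5^{-}$-face is a chordless cycle, every face sharing an edge with a $5^{-}$-face has length at least $6$, and no other $5^{-}$-face lies within distance $2$ of it. In particular a $3$-vertex is incident with at most one $5^{-}$-face, a facial triangle is surrounded by three pairwise distinct $\ge 6$-faces, and more generally each $5^{-}$-cycle is an isolated ``island'': away from finitely many such islands the graph behaves like a planar graph of girth at least $6$. Combined with the reductions of the previous step, this says each island is attached to the surrounding sea of $\ge 6$-faces by enough high-degree vertices to pay for it.

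For the discharging, give each vertex $v$ the charge $d(v)-4$ and each face $f$ the charge $\ell(f)-4$; by Euler's formula these sum to $-8$. Only $3$-vertices and $3$-faces are negative (each with charge $-1$), and these are sparse and mutually far apart, while $\ge 6$-faces carry a comfortable surplus. I would use rules of the shape: every $\ge 6$-face sends a fixed amount to each incident $3$-vertex and to each adjacent $5^{-}$-face (a $3$-face collecting from all three distinct $\ge 6$-faces around it), and every vertex of degree at least $4$ also helps its incident $3$-vertices; the amounts are calibrated using that a $3$-vertex on a short face has its other two faces of length $\ge 6$, that $\ge 6$-faces cannot be incident with too many $3$-vertices (a girth-$6$-type count away from islands), and that the reductions bound the $3$-vertices on an island. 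Showing every final charge is non-negative then contradicts the total $-8$. The delicate bookkeeping, in which essentially all the case analysis lives, is the balance around an island: a facial triangle whose three corners are as low-degree as the reductions allow has local deficiency close to $-4$, and one must check that the enclosing $\ge 6$-faces --- each of which is also feeding its own incident $3$-vertices elsewhere --- still end non-negative. Calibrating the reducible-configuration list and the discharging rules so that this local account closes is the crux.
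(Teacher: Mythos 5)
Your plan is the right genre (minimal counterexample plus discharging), but as written it has a genuine gap at its core: you never strengthen the statement being induced on, and without that strengthening the reducibility step does not go through. The paper does not prove \autoref{Colorable} directly; it derives it from \autoref{SuperColorable}, a precoloring-extension statement: any $(\mathcal{I},\mathcal{F})$-coloring of a \emph{good $9^{-}$-cycle} $C_{0}$ extends to a \emph{super} \textit{IF}-coloring of $(G,C_{0})$, i.e.\ one with no splitting $F$-path of $C_{0}$. This double strengthening (precolored boundary cycle, plus the ban on splitting $F$-paths) is exactly what makes the reducible configurations provable: it kills separating good $9^{-}$-cycles (\autoref{SEP}), and it is what allows the identification arguments in \autoref{Y} and \autoref{FACE}, where two far-apart vertices are merged, the smaller graph is colored, and the coloring is pulled back --- the absence of splitting $F$-paths is what guarantees that un-identifying the vertices and re-inserting the deleted $3$-vertices (via \autoref{prop}) does not close an $F$-cycle through the rest of the graph. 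Your substitute for this, ``recolor along the tree-path between the two $\mathcal{F}$-neighbours, swapping $\mathcal{I}$ and $\mathcal{F}$ on that path,'' is not a sound operation: vertices on that path have neighbours off the path, so the swap can create two adjacent $\mathcal{I}$-vertices or new $F$-cycles elsewhere, and nothing in your setup controls this. The same global nature of the forest condition is why a cut-vertex gluing by ``moving the cut vertex between $\mathcal{I}$ and $\mathcal{F}$'' is not automatic either.

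Beyond that structural gap, the proposal defers precisely the parts where the work lives: the list of reducible configurations is described only by the shape you ``expect to need'' (the paper's actual ones are \autoref{34Chord}, \autoref{Y}, \autoref{FACE}, each with a delicate extension/recoloring analysis), and the discharging is only calibrated in outline --- note the paper needs a charge assignment that treats the precolored outer face specially ($\mu(D)=d(D)+6$) and ends with a nontrivial count showing $\mu^{*}(D)>0$, which has no counterpart in your $d(v)-4$, $\ell(f)-4$ scheme. So the proposal is a reasonable high-level plan, but the key idea of the paper (the super \textit{IF}-coloring extension theorem for good $9^{-}$-cycles) is missing, and the local recoloring you propose in its place would fail.
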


An \emph{$F$-path} (respectively, \emph{$F$-cycle}) is a path (respectively, cycle) where all vertices are colored with $F$. Given a graph $G$ and a cycle $Q$ in $G$, a \emph{splitting $F$-path} of $Q$ is an $F$-path whose two endpoints are on $Q$ and all other vertices (at least one) are not on $Q$.  A \emph{super \textit{IF}-coloring} of $(G, Q)$ is an \textit{IF}-coloring with the additional property that no splitting $F$-paths exist.

\begin{figure}%
\centering
\def\s{1.5}
\subcaptionbox{Type I. \label{FB1}}[0.4\linewidth]{\begin{tikzpicture}
\draw [line width = 1.5pt] circle (\s);
\draw (0, 0)--(90:\s);
\draw (0, 0)--(250:\s);
\draw (0, 0)--(290:\s);
\drawnode{0, 0};
\node at (10:0.5*\s) {$6$};
\node at (170:0.5*\s) {$6$};
\node at (270:0.5*\s) {$3$};

\foreach \ang in {1, ..., 9}{
\drawnode{\ang*40+50:\s};
\node at (\ang*40+50:1.2*\s) {$v_{\scriptscriptstyle\ang}$};}
\end{tikzpicture}}
\subcaptionbox{Type II. \label{FB2}}[0.4\linewidth]{\begin{tikzpicture}
\draw [line width = 1.5pt] circle (\s);
\draw (90:0.4*\s)--(90:\s);
\draw (210:0.4*\s)--(210:\s);
\draw (330:0.4*\s)--(330:\s);
\draw (90:0.4*\s)--(210:0.4*\s)--(330:0.4*\s)--cycle;
\drawnode{90:0.4*\s};
\drawnode{210:0.4*\s};
\drawnode{330:0.4*\s};
\node at (30:0.6*\s) {$6$};
\node at (150:0.6*\s) {$6$};
\node at (270:0.6*\s) {$6$};
\node at (0, 0) {$3$};

\foreach \ang in {1, ..., 9}{
\drawnode{\ang*40+50:\s};
\node at (\ang*40+50:1.2*\s) {$v_{\scriptscriptstyle\ang}$};}
\end{tikzpicture}}
\caption{Two types of bad $9$-cycles.}
\label{S9}
\end{figure}
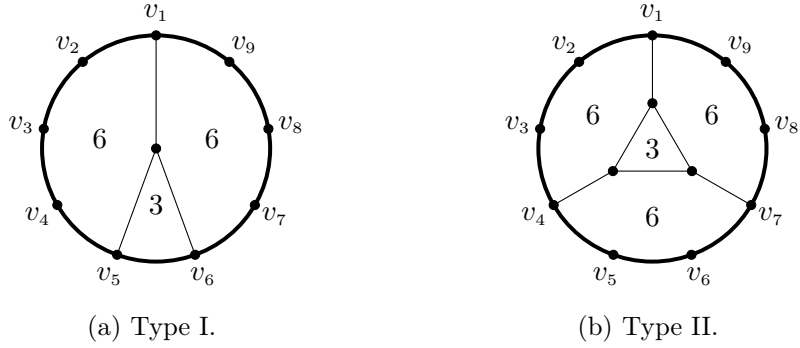

For a cycle $C$, if there exists three pairwise adjacent vertices $x, y, z \notin V(C)$ with three neighbors $x', y'$ and $z'$ on $C$, then we call $G[\{xy, yz, xz, xx', yy', zz'\}]$ a \emph{triclaw} of $C$, see \autoref{triclaw}. A \emph{bad cycle} is a $9$-cycle that has a claw or a triclaw, \emph{good} otherwise. Observe that every $8^{-}$-cycle is good. In \autoref{S9}, the two triangles are called \emph{associated triangles} of the bad cycles, where $v_{4}$ and $v_{7}$ are called \emph{bad vertices} of associated triangles. Note that the two bad vertices are at distance $3$ along the bad cycle. Since $d^{*} \geq 3$, no vertex on the bad cycle of type II is contained in a $5^{-}$-cycle, and no vertex  $\neq v_{5}, v_{6}$ on the bad cycle of type I is contained in a $5^{-}$-cycle. 

\begin{theorem}\label{SuperColorable}
Assume $G$ is a planar graph in which the $5^{-}$-cycles are at least $3$ apart. If $C_{0}$ is a good $9^{-}$-cycle of $G$, then every $(\mathcal{I}, \mathcal{F})$-coloring $\phi_{0}$ of $G[V(C_{0})]$ can be extended to a super \textit{IF}-coloring of $(G, C_{0})$.
\end{theorem}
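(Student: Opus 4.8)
The plan is the familiar route for precolouring–extension theorems: let $(G,C_{0},\phi_{0})$ be a counterexample minimising $|V(G)|$, and derive a contradiction by combining reducibility with discharging. Throughout I exploit that a super \textit{IF}-colouring must avoid exactly two obstructions: a cycle in $G[\mathcal{F}]$, and a splitting $F$-path of $C_{0}$. First I would pin down the shape of a minimum counterexample. Minimality forces $G$ to be connected, and in fact $C_{0}$ must bound a face: otherwise $C_{0}$ cuts $G$ into two proper subgraphs, each of which carries a super \textit{IF}-colouring extending $\phi_{0}$ by minimality, the two agree on $V(C_{0})$, and they glue, because a cycle in $G[\mathcal{F}]$ or a splitting $F$-path of $C_{0}$ meeting both sides would restrict to a splitting $F$-path of $C_{0}$ on one side, which is forbidden. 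So take $C_{0}$ to be the outer face, with everything else inside it. The same gluing argument, run along a chord of $C_{0}$ or along a separating cycle of length at most $9$, shows a minimum counterexample has no such separators — with the caveat that a separating $9$-cycle must be \emph{good} for minimality to be applicable to a side, so separating bad $9$-cycles (and the special features singled out in the introduction) are deferred to the discharging phase.

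\emph{Reducible configurations.} Here one assembles the list of interior configurations that cannot occur. The template in each case is: locate the configuration; pass to a smaller graph $G'$ by deleting a vertex, suppressing a $2$-vertex, or identifying two vertices at distance $2$; check that $G'$ is planar, that all its $5^{-}$-cycles remain pairwise at distance $\geq 3$, and that it still carries a good $9^{-}$-cycle whose precolouring extends $\phi_{0}$ — this is the point where one must verify the surgery creates no new short cycle and no new claw or triclaw, using the running observation that, away from $v_{5},v_{6}$, no vertex of a bad $9$-cycle lies on any $5^{-}$-cycle; apply minimality to $G'$; then lift the colouring to $G$, colouring the removed or merged vertices so that no $\mathcal{F}$-cycle and no splitting $F$-path of $C_{0}$ is created. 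The easy end: every interior $2$-vertex is reducible (colour it $F$ if it has at most one neighbour coloured $F$, and $I$ otherwise), so $\delta(G)\geq 3$ in the interior; the hard end is interior $3$-vertices and adjacent small-vertex pairs, where one must arrange an identification that prevents two would-be $\mathcal{F}$-neighbours from sitting in a common component of $G[\mathcal{F}]$. The output should be the usual package: no two adjacent $2$-vertices; interior $2$- and $3$-vertices kept far from triangles; triangles pairwise far and far from $4$- and $5$-faces; plus a few extra restrictions in the layer adjacent to $C_{0}$ to compensate for the fact that the vertices of $C_{0}$ are frozen and untouchable by the reductions.

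\emph{Discharging.} Assign to each vertex $v$ the charge $d(v)-4$ and to each face $f$ the charge $d(f)-4$, so the total is $-8$ by Euler's formula. The only objects with negative initial charge are $3$-faces, $2$-vertices, $3$-vertices, and (if $d(f_{0})<4$) the outer face $f_{0}$. Design rules under which $\geq 7$-faces and $f_{0}$ hand charge to incident triangles and incident low-degree vertices, with a dedicated rule for the $9$-faces bounded by bad cycles so that their associated triangles get paid for. Since $d^{*}\geq 3$ makes every triangle a $3$-face all of whose neighbouring faces have length $\geq 6$, there is ample charge to cover the sparse deficient objects, and the configurations of the previous step kill the remaining ways a vertex or face could stay negative. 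One then checks every vertex and every internal face ends with non-negative charge, while $f_{0}$, which only feeds its at most $d(f_{0})\leq 9$ incident elements, finishes with charge strictly above $-8$; summing, the total exceeds $-8$, a contradiction.

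\emph{Main obstacle.} The genuine difficulty is not a single computation but the interaction of two things. First, making every reduction surgery respect both the hypothesis that $5^{-}$-cycles stay pairwise at distance $\geq 3$ and the existence of a \emph{good} precoloured $9^{-}$-cycle in the reduced graph — this is the bad-cycle and special-configuration bookkeeping, and it is exactly why bad $9$-cycles, associated triangles and bad vertices are isolated as concepts before the proof begins. Second, tuning the discharging rules so that the only two regions where sparseness of short cycles does not already suffice — the bad $9$-faces, and the ring of interior structure adjacent to the frozen cycle $C_{0}$ where reducibility is weakest — still come out non-negative. I would expect the write-up to be dominated by a long but individually routine case analysis of interior $3$-vertices and small-vertex pairs, sitting between a short opening reduction and a compact discharging finish.
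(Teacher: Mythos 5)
Your skeleton does coincide with the paper's: minimal counterexample, $C_{0}$ chordless and bounding the outer face, no separating good $9^{-}$-cycles (\autoref{SEP}), internal minimum degree $3$ (\autoref{mindeg}), reducible configurations obtained by deleting a few vertices and identifying two outside neighbours while checking that no new $5^{-}$-cycle appears and that the precoloured cycle stays good, and finally discharging in which the outer face produces the contradiction. But everything you defer as ``individually routine case analysis'' is exactly where the proof lives, and your concrete guesses for the reducible package point the wrong way. In this graph class you cannot keep interior $3$-vertices away from triangles: internal $(3,3,3)$-faces do occur and are dealt with in the discharging, not reduced away; ``triangles pairwise far and far from $4$- and $5$-faces'' is just the hypothesis $d^{*}\geq 3$, not a lemma. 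The actual reducible configurations (Lemmas \ref{34Chord}, \ref{Y} and \ref{FACE}) concern $5^{-}$-faces all of whose vertices are $3$-vertices together with adjacent $6$-faces or larger internal faces, and their proofs are not ``surgery plus lift'': after extending to the reduced graph one nicely colours the deleted path and then must run a recolouring analysis that destroys every surviving barrier (an $F$-cycle or splitting $F$-path through the last-coloured vertex) without creating new ones, and must also rule out that the identification turns $C_{0}$ into a bad $9$-cycle in both of two symmetric surgeries (this is where the claw/triclaw bookkeeping and \autoref{BAD} are used). None of that is present in your plan, and it is the bulk of the argument.

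The discharging as you set it up would also not close. With charges $d(v)-4$ and $d(f)-4$ every internal $3$-vertex starts at $-1$, yet your rules let only $7^{+}$-faces and the outer face send charge, so a $3$-vertex incident only to $6$-faces stays negative; away from the sparse $5^{-}$-cycles the graph can look locally like the hexagonal lattice, where the $d-4$ accounting is exactly tight, so you would be forced into a much heavier rule set in which every $6$-face redistributes its surplus. The paper instead takes $\mu(v)=2d(v)-6$, $\mu(f)=d(f)-6$ and $\mu(D)=d(D)+6$ (total zero), so $3$-vertices and $6$-faces are neutral and only the $5^{-}$-faces need support, which is funnelled from $4^{+}$-vertices, $7^{+}$-faces and $D$ via rules whose soundness rests on Lemmas \ref{34Chord}--\ref{FACE}. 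Finally, you give no mechanism for the decisive last step: the paper computes $\mu^{*}(D)=6-d(D)+3\tau_{3}+2e'-\tau_{6}+p$ and shows that $\mu^{*}(D)\leq 0$ forces $d(D)=9$ and forces $C_{0}$ to carry a claw or a triclaw, i.e.\ to be one of the two bad types in \autoref{S9}; this is precisely where the goodness of $C_{0}$ enters, and ``a dedicated rule so that the associated triangles get paid'' is not a substitute for that computation. (A small further point: minimise $|V(G)|+|E(G)|$, not just $|V(G)|$, if you want chordlessness of $C_{0}$ for free.)
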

\begin{proof}[Proof of \autoref{Colorable} from \autoref{SuperColorable}]
Assume $G$ is a planar graph as described in \autoref{Colorable}. Let $H \coloneqq G$ if $G$ has a $5^{-}$-cycle $C_{0}$. Otherwise, let $H$ be obtained from $G$ by adding a triangle $xyz$, namely $C_{0}$, such that $x$ is a cut-vertex of $H$. Clearly, in both cases $C_{0}$ is an induced cycle, so it has an $(\mathcal{I}, \mathcal{F})$-coloring $\phi_{0}$. By \autoref{SuperColorable}, $(H, C_{0})$ has a super \textit{IF}-coloring $\phi$ such that its restriction on $C_{0}$ is $\phi_{0}$. Observe that the restriction of $\phi$ on $G$ is an $(\mathcal{I}, \mathcal{F})$-coloring of $G$. This completes the proof of \autoref{Colorable}.
\end{proof}

Let $H$ be a subgraph of a graph $G$. Given an $(\mathcal{I}, \mathcal{F})$-coloring of $H$, to \emph{nicely color} an uncolored vertex $v$ means assigning the color $I$ to $v$ if $v$ has no neighbors of color $I$, and assigning $F$ otherwise. For an uncolored path $P = x_{1}x_{2}\dots x_{m}$, to \emph{nicely color $P$} means to nicely color $x_{1}, x_{2}, \dots, x_{m}$ sequentially. 

It is easy to obtain the following proposition. 
\begin{proposition}\label{prop}
Assume $G$ is a graph and $S = u_{1}, u_{2}, \dots, u_{t}$ is a sequence of vertices in $G$.  Let $\phi$ be an \textit{IF}-coloring of a subgraph $H$ of $G - V(S)$. Fix a cycle $Q$ in $H$. If each $u_{i}$ has at most two neighbors in $V(H) \cup \{u_{1}, \dots, u_{i-1}\}$, then the nicely coloring of $S$ would not create new $F$-cycles or new splitting $F$-paths of $Q$. Moreover, if $u_{1}u_{2}\dots u_{t}$ is a path, then there is no $F$-path with the pattern $u_{i}u_{i+1}v_{i+1}$, where $u_{i}$ and $v_{i+1}$ are two colored neighbor of $u_{i+1}$ when we try to color $u_{i+1}$. 
\end{proposition}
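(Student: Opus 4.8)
The plan is a short argument by contradiction resting on a single bookkeeping observation about the nicely‑coloring rule. The observation I would isolate first is this: when a vertex $u_i$ is about to be colored, all of its already‑colored neighbors lie in $V(H)\cup\{u_{1},\dots,u_{i-1}\}$, so there are at most two of them; and if $u_i$ then receives the color $F$, the definition of nicely coloring forces at least one of these already‑colored neighbors to have color $I$. Hence, at the instant it is assigned $F$, the vertex $u_i$ has \emph{at most one} already‑colored neighbor of color $F$, and since colors are never overwritten, any further $F$‑neighbors of $u_i$ can only come from $u_{i+1},\dots,u_{t}$. (The same tally incidentally keeps $\mathcal{I}$ independent, but that is not what is being asserted.)

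For the main clause I would then argue as follows. Suppose nicely coloring $S$ creates a new $F$‑cycle; since every vertex of $H$ is already colored and $\phi$ has no $F$‑cycle, such a cycle $C$ must pass through some vertex of $S$. Let $u_k$ be the vertex of $V(C)\cap V(S)$ of largest index. Then $u_k$ has exactly two neighbors $a,b$ on $C$; both are colored $F$ (every vertex of an $F$‑cycle is), and by the maximality of $k$ both $a$ and $b$ lie in $V(H)\cup\{u_{1},\dots,u_{k-1}\}$, hence are already colored when $u_k$ is reached. So at the moment $u_k$ was colored it already had two colored $F$‑neighbors, contradicting the observation above, since $u_k$ is colored $F$. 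The case of a new splitting $F$‑path $P$ of $Q$ is identical once one notes that the two endpoints of $P$ lie on $Q\subseteq V(H)$ while $V(H)\cap V(S)=\emptyset$; thus $u_k$ is an internal vertex of $P$ and again has two neighbors on $P$, both colored $F$ and both colored before $u_k$, giving the same contradiction.

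The ``moreover'' clause falls straight out of the same observation: if $u_{1}u_{2}\cdots u_{t}$ is a path and, when $u_{i+1}$ is about to be colored, its two already‑colored neighbors are exactly $u_i$ and $v_{i+1}$, then $u_iu_{i+1}v_{i+1}$ cannot be an $F$‑path — if $u_{i+1}$ is colored $I$ this is trivial, and if $u_{i+1}$ is colored $F$ then the nicely‑coloring rule guarantees one of $u_i,v_{i+1}$ was already colored $I$.

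I do not expect a genuine obstacle here; the only points that need a moment's care are verifying that the maximal‑index vertex $u_k$ really has both of its two relevant neighbors already colored (which is exactly what the choice of $k$ delivers), and the small remark that the endpoints of a splitting $F$‑path, lying on $Q$, fall outside $S$ — so that $u_k$ is forced to be an internal vertex of the path.
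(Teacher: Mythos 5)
Your proof is correct: the key observation (a vertex that receives $F$ under nice coloring has at most one previously colored $F$-neighbor, since one of its at most two colored neighbors must be $I$) together with choosing the largest-indexed vertex of $S$ on a putative new $F$-cycle or splitting $F$-path of $Q$ settles both clauses, and your remark that the endpoints of a splitting $F$-path lie on $Q\subseteq V(H)$, forcing that vertex to be internal, closes the only delicate point. The paper states this proposition without proof (``It is easy to obtain the following proposition''), and your argument is precisely the intended routine verification, so there is nothing to compare beyond noting that your write-up supplies the omitted details correctly.
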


To end this section, we introduce some notations. For a plane graph $G$, we use $V(G), E(G)$, and $F(G)$ to denote the set of vertices, edges, and faces, respectively. When $C$ is a cycle in $G$, we use $\int(C)$ and $\ext(C)$ to denote the set of vertices located inside and outside of $C$, respectively. A $k$-vertex (resp. $k^{+}$-vertex, and $k^{-}$-vertex) is a vertex of degree $k$ (resp. at least $k$, and at most $k$). Similarly, a $k$-face (resp. $k^{+}$-face, and $k^{-}$-face) is a face of degree $k$ (resp. at least $k$, and at most $k$). An $(l_{1}, l_{2}, \dots, l_{k})$-face is a $k$-face $[v_{1}v_{2}\dots v_{k}]$ with $d(v_{i}) = l_{i}$ for all $1 \leq i \leq k$. An $(l_{1}, l_{2})$-edge is an edge with two ends having degree $l_{1}$ and $l_{2}$. 
 
\section{Proof of \autoref{SuperColorable}}
\label{sec:2}

We will prove \autoref{SuperColorable} by contradiction. Let $(G, C_{0}, \phi_{0})$ be a counterexample to the statement. Furthermore, we select this counterexample so that the quantity $|V(G)| + |E(G)|$ is as small as possible. In this instance, $\phi_{0}$ cannot be extended to a super \textit{IF}-coloring of $(G, C_{0})$. We assume that $G$ has been embedded on the plane, and by minimality, $C_{0}$ has no chords.

A cycle is a \emph{separating cycle} if neither its interior nor exterior is empty. 
\begin{lemma}\label{SEP}
There are no separating good $9^{-}$-cycles. Furthermore, we can assume that $C_{0}$ bounds the outer face $D$ of $G$. 
\end{lemma}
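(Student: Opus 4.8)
The plan is to use the minimality of the counterexample $(G, C_0, \phi_0)$ in the standard two-step fashion. First I would rule out separating good $9^-$-cycles. Suppose $C$ is such a cycle, with both $\int(C)$ and $\ext(C)$ nonempty. Let $G_1 = G - \int(C)$ (the part of $G$ inside and on $C$ together with $C_0$'s side) — more precisely, let $G_1$ be the subgraph of $G$ consisting of $C$ together with everything in its exterior (which contains $C_0$ since $C_0$ is chordless and we will see $C \neq C_0$ is possible only in a controlled way), and let $G_2$ be the subgraph consisting of $C$ together with everything in its interior. Both $G_1$ and $G_2$ are proper subgraphs of $G$ (since each side is nonempty), both are planar with $5^-$-cycles still at least $3$ apart, and both contain a good $9^-$-cycle: $G_1$ contains $C_0$, and $G_2$ contains $C$ itself. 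By minimality applied to $(G_1, C_0, \phi_0)$, there is a super \textit{IF}-coloring $\phi_1$ of $(G_1, C_0)$; its restriction to $V(C)$ is an $(\mathcal{I}, \mathcal{F})$-coloring of $G[V(C)]$ (here one checks $C$ is chordless in $G$, else $C$ would have a chord giving a shorter good cycle on each side or contradicting $d^* \geq 3$), so by minimality applied to $(G_2, C)$ there is a super \textit{IF}-coloring $\phi_2$ of $(G_2, C)$ agreeing with $\phi_1$ on $C$. I would then glue $\phi_1$ and $\phi_2$ and argue the result is a super \textit{IF}-coloring of $(G, C_0)$: an $F$-cycle or splitting $F$-path of $C_0$ in $G$ would either lie entirely in $G_1$ (forbidden by $\phi_1$), lie entirely in $G_2$, or cross $C$; a crossing $F$-structure restricts to an $F$-path in $G_2$ with both endpoints on $C$ and interior off $C$ — i.e.\ a splitting $F$-path of $C$ — which $\phi_2$ forbids, and similarly on the $G_1$ side; a genuine $F$-cycle meeting both sides decomposes into $F$-paths through $C$, again contradicting one of the two super-colorings. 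This yields a super \textit{IF}-coloring of $(G, C_0)$, contradicting that it is a counterexample.

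For the second sentence, once no separating good $9^-$-cycle exists, I would argue $C_0$ bounds a face. Consider the two regions of the plane determined by $C_0$; call them the inside and the outside. If both sides contained a vertex of $G$, then $C_0$ itself would be a separating good $9^-$-cycle, which we have just excluded. Hence one side of $C_0$ is empty, i.e.\ $C_0$ bounds a face of $G$. After re-embedding (or simply relabeling inside/outside, using that $G$ is connected so the embedding is essentially unique on the sphere), we may take that empty side to be the unbounded region, so $C_0$ bounds the outer face $D$.

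The main obstacle I expect is the gluing verification: making sure that combining $\phi_1$ and $\phi_2$ does not create a new $F$-cycle or a new splitting $F$-path of $C_0$ that "uses" vertices of $C$ as transit points — the care is that a splitting $F$-path of $C_0$ passing through the interior of $C$ need not itself be a splitting $F$-path of $C$ (its endpoints are on $C_0$, not $C$), so I need to cut it at its first and last intersection with $C$ and observe the middle portion is a splitting $F$-path of $C$ (endpoints on $C$, interior strictly inside $C$, at least one interior vertex because the two sides are genuinely separated), which $\phi_2$ forbids; the degenerate cases (path touching $C$ in a single vertex, or endpoints lying on $C$) must be handled, and one must also confirm $C$ is chordless so that the restriction of $\phi_1$ to $V(C)$ is a legitimate $(\mathcal{I},\mathcal{F})$-coloring of $G[V(C)]$ to feed into the minimality hypothesis for $G_2$. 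A secondary subtlety is checking that $C \ne C_0$ or, if $C = C_0$ is itself separating, that the argument still closes (in that case $G_1$ is just $C_0$ and the claim is immediate).
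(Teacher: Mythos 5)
Your proposal is correct and follows essentially the same route as the paper: delete one side of the separating good $9^{-}$-cycle, apply minimality to get a super \textit{IF}-coloring of the outer piece containing $C_{0}$, restrict it to $V(C)$ and apply minimality again to the inner piece $(G-\ext(C), C)$, then glue, with the non-separating case of $C_{0}$ itself giving the face-bounding claim after re-embedding. The only differences are presentational (the paper treats $C_{0}$ first and other cycles second, and leaves the gluing verification implicit, which you usefully spell out), and your parenthetical worry about $C$ being chordless is unnecessary, since the restriction of an \textit{IF}-coloring to the induced subgraph $G[V(C)]$ is automatically an \textit{IF}-coloring whether or not chords are present.
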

\begin{proof}
First, we prove that $C_{0}$ is not a separating cycle. Suppose, for contradiction, that $C_{0}$ is a separating cycle. This implies that $\int(C_{0}) \neq \emptyset$ and $\ext(C_{0}) \neq \emptyset$. By minimality, we can extend $\phi_{0}$ to a super \textit{IF}-coloring $\phi_{1}$ of $(G - \int(C_{0}), C_{0})$ and a super \textit{IF}-coloring $\phi_{2}$ of $(G - \ext(C_{0}), C_{0})$. Combining these two colorings, $\phi_{1}$ and $\phi_{2}$, we obtain a super \textit{IF}-coloring of $(G, C_{0})$, a contradiction. Since $C_{0}$ has no chords and is an induced cycle, we can assume that it bounds the outer face of $G$.  

Next, we show that there are no other separating good $9^{-}$-cycles. Let $C$ be a separating good $9^{-}$-cycle of $G$. By minimality, we can extend $\phi_{0}$ to a super \textit{IF}-coloring of $(G - \int(C), C_{0})$. Furthermore, the coloring of $G[V(C)]$ can be further extended to a super \textit{IF}-coloring of $(G - \ext(C), C)$. Combining these two colorings, we obtain a super \textit{IF}-coloring of $(G, C_{0})$, a contradiction.  
\end{proof}

By \autoref{SEP}, every claw or triclaw in a bad cycle $C'$ partitions the region bounded $C'$ into faces. We define a \emph{boundary vertex} as a vertex on the outer face. A vertex is \emph{internal} if it is not a boundary vertex, and a face is \emph{internal} if it contains no boundary vertices. 

\begin{lemma}\label{mindeg}
Each internal vertex has degree at least $3$. 
\end{lemma}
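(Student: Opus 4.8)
The plan is to argue by contradiction using the minimality of $(G, C_0, \phi_0)$. Suppose some internal vertex $v$ has $d(v) \le 2$. First I would handle the degenerate cases: if $d(v) \le 1$, then $H \coloneqq G - v$ is smaller, still has its $5^-$-cycles at least $3$ apart, and $C_0$ is still a good $9^-$-cycle bounding the outer face of $H$ (removing an internal vertex changes nothing near $C_0$); so by minimality $\phi_0$ extends to a super \textit{IF}-coloring $\phi$ of $(H, C_0)$. Then I nicely color $v$ in $G$. Since $v$ has at most one neighbor, Proposition~\ref{prop} (applied with $S = v$, $Q = C_0$) guarantees this creates no new $F$-cycle and no new splitting $F$-path of $C_0$, and a single vertex with one colored neighbor can obviously be colored without creating a monochromatic-$I$ edge; hence we get a super \textit{IF}-coloring of $(G, C_0)$, contradicting that $G$ is a counterexample.

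The substantive case is $d(v) = 2$ with $v$ internal, say $N(v) = \{x, y\}$. I would set $H \coloneqq G - v$ and extend $\phi_0$ to a super \textit{IF}-coloring $\phi$ of $(H, C_0)$ by minimality — here I should first check $H$ is a legitimate smaller instance: $C_0$ is unaffected (it bounds the outer face, and since $v$ is internal $v \notin V(C_0)$), the distance condition on $5^-$-cycles can only improve when deleting a vertex, and $C_0$ remains a good $9^-$-cycle. Now I want to color $v$. If we can nicely color $v$ (i.e. $v$ gets color $I$, or $v$ gets $F$ but then $x$ and $y$ are not both $F$, equivalently they don't lie together with $v$ on an $F$-path), Proposition~\ref{prop} again finishes it. The only obstruction is: both $x$ and $y$ are colored $F$ and nicely coloring $v$ forces $\phi(v) = F$ — this could create an $F$-cycle through $x, v, y$, or a splitting $F$-path of $C_0$ with endpoints on $C_0$ passing through $v$ (this requires $x, y \in V(C_0)$, but that would make $xvy$ a path with both endpoints on $C_0$ and interior vertex $v \notin V(C_0)$).

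To defeat this obstruction I would instead recolor: the idea is that since both $x$ and $y$ have color $F$, and $F$ induces a forest in $\phi$, the graph on $F$-vertices has bounded structure near $x$ and $y$. I would try to recolor one of $x, y$ — or an $F$-neighbor thereof — to $I$, or to uncolor a short segment and re-extend, so that after placing $v$ the $F$-set still induces a forest and no splitting $F$-path appears. Concretely, one standard move: delete $v$ but also identify $x$ and $y$ into a single vertex (or add the edge $xy$ if absent) to get a smaller graph $H'$; a super \textit{IF}-coloring of $(H', C_0)$ gives $x, y$ the same color and forbids the dangerous $F$-path, and then coloring $v$ is safe. One must check $H'$ still satisfies the hypotheses — this is where planarity and the $d^* \ge 3$ condition do the work, since a new short cycle created by the identification would have to pass through both $x$ and $y$, and their being the two neighbors of a degree-$2$ internal vertex constrains where such cycles can go.

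The main obstacle I expect is verifying that the reduced graph $H$ (or $H'$) genuinely satisfies all hypotheses of Theorem~\ref{SuperColorable} — in particular that deleting or identifying vertices does not create a separating good $9^-$-cycle problem or violate the $5^-$-cycles-far-apart condition — and, in the $d(v)=2$ case, ruling out or repairing the single bad scenario where coloring $v$ is forced to $F$ with both neighbors $F$. Everything else is a direct appeal to minimality plus Proposition~\ref{prop}; the delicate part is the local surgery near a degree-$2$ internal vertex and confirming the hypotheses transfer.
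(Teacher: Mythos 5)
Your handling of $d(v)\le 1$ matches the paper, but your ``substantive case'' $d(v)=2$ rests on a misreading of the nice-coloring rule, and the obstruction you build your surgery around cannot occur. By definition, to nicely color $v$ means: give $v$ the color $I$ if $v$ has \emph{no} neighbor colored $I$, and give it $F$ otherwise. So if both neighbors $x,y$ of $v$ are colored $F$, then $v$ receives $I$ (not $F$), which is harmless since all of $v$'s neighbors are in $\mathcal{F}$. Conversely, $v$ receives $F$ only when it has at least one neighbor colored $I$, in which case $v$ has at most one $F$-neighbor; a vertex of the $F$-forest with at most one $F$-neighbor cannot close an $F$-cycle, and since $v$ is internal (hence $v\notin V(C_{0})$) it can be neither an endpoint nor an interior vertex of a splitting $F$-path of $C_{0}$. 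This is exactly the content of Proposition~\ref{prop} with $S=v$, and it is how the paper argues: delete $v$, extend $\phi_{0}$ to a super \textit{IF}-coloring of $G-v$ by minimality (deleting an internal vertex clearly preserves all hypotheses), and nicely color $v$. The whole lemma is this one-line reduction.

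Consequently the machinery you propose for $d(v)=2$ --- identifying $x$ and $y$, adding the edge $xy$, or recoloring $F$-neighbors --- is solving a non-problem, and as written it is also the incomplete part of your argument: you leave unverified that the identification creates no short cycles, keeps $d^{*}\ge 3$, and keeps $C_{0}$ a chordless good cycle bounding the outer face. That kind of verification is genuinely needed in the paper (see Lemmas~\ref{Y} and~\ref{FACE}, where identifications are used and the distance claim is proved via Lemma~\ref{SEP}), but it is entirely unnecessary here. Once you apply the nice-coloring definition correctly, your case $d(v)=2$ collapses to the same trivial argument as $d(v)\le 1$, and the proof is complete without any graph surgery.
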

\begin{proof}
Let $v$ be an internal vertex of degree at most two. By minimality, we can extend $\phi_{0}$ to a super \textit{IF}-coloring of $G - v$. Nicely coloring $v$ can be incorporated into the extended super \textit{IF}-coloring of $G -v$, resulting in a super \textit{IF}-coloring of the entire graph $G$. This contradicts our initial assumption that $(G, C_{0}, \phi_{0})$ is a counterexample.
\end{proof}

It is easy to obtain the following result. 
\begin{lemma}\label{normally}
Let $f_{1}$ be a $6$-face and $f_{2}$ be a $5^{-}$-face. If $f_{1}$ and $f_{2}$ have a common edge $xy$, then they are normally adjacent, and $G[V(f_{1}) \cup V(f_{2})]$ is a cycle with a unique chord. Moreover, no vertex in $V(f_{1}) \setminus \{x, y\}$ is contained in a $5^{-}$-cycle. 
\end{lemma}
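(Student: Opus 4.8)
The plan is to study the subgraph $C_{1}\cup C_{2}$, where $C_{1}$ and $C_{2}$ are the boundary cycles of $f_{1}$ and $f_{2}$; a standard preliminary reduction lets us assume $G$ is $2$-connected, so $C_{1}$ is a $6$-cycle and $C_{2}$ is a $5^{-}$-cycle, both containing the edge $xy$. The key inputs are two immediate consequences of $d^{*}\geq 3$: (i) two distinct $5^{-}$-cycles share no vertex; and hence (ii) any vertex at distance at most $2$ from a fixed $5^{-}$-cycle $C$ lies on no $5^{-}$-cycle other than $C$. From (i), both $C_{1}$ and $C_{2}$ are chordless, since a chord of a $6$- or $5^{-}$-cycle would split it into two cycles of length at most $5$ sharing the chord edge.

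First I would prove that $f_{1}$ and $f_{2}$ are normally adjacent, i.e.\ $V(C_{1})\cap V(C_{2})=\{x,y\}$ and $xy$ is their only common edge. Writing $C_{1}=xyz_{1}z_{2}z_{3}z_{4}$, every vertex of $C_{1}$ lies within distance $2$ of $C_{2}$ (for instance $z_{1}$ and $z_{4}$ are adjacent to $y,x\in V(C_{2})$, and $z_{2},z_{3}$ are at distance $2$). If $C_{1}$ and $C_{2}$ shared anything beyond $xy$, then, using chordlessness, one can select a sub-path of $C_{1}$ and a distinct sub-path of $C_{2}$ with the same two endpoints whose union is a cycle of length at most $5$, distinct from $C_{2}$ but still meeting it, contradicting (i). Once this is done, $C_{1}\cup C_{2}$ is a cycle $B$ of length $|C_{2}|+4\leq 9$ together with the single chord $xy$.

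It then remains to show $G[V(f_{1})\cup V(f_{2})]$ contains no further edge. Chords inside $C_{1}$ or inside $C_{2}$ are already excluded, and the only other possibility is an edge joining a vertex of $V(C_{1})\setminus\{x,y\}$ to one of $V(C_{2})\setminus\{x,y\}$; such an edge is a chord of the $(|C_{2}|+4)$-cycle $B$, so it closes a cycle of length at most $\lfloor(|C_{2}|+4)/2\rfloor+1\leq 5$ passing through a vertex of $C_{2}$ and distinct from $C_{2}$, again impossible by (i). Hence $G[V(f_{1})\cup V(f_{2})]$ is exactly $B$ with the unique chord $xy$. The final assertion now follows at once: each vertex of $V(f_{1})\setminus\{x,y\}$ lies within distance $2$ of $C_{2}$ and is not on $C_{2}$ (by normal adjacency), so by (ii) it lies on no $5^{-}$-cycle.

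The step requiring the most care is the normal-adjacency argument: deciding which sub-paths of $C_{1}$ and $C_{2}$ to glue together involves a few genuinely tight sub-cases when $|C_{2}|=5$, where a naive choice yields a $6$-cycle rather than a $5^{-}$-cycle, so one must either choose the sub-paths more cleverly or use that $f_{1}$ and $f_{2}$ are faces with empty interiors (which forces certain degree-$2$ vertices onto $C_{0}$, via \autoref{mindeg}). Everything else reduces to short mechanical checks once $d^{*}\geq 3$ is invoked in forms (i) and (ii).
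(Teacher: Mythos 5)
The paper gives no proof of this lemma at all (it is introduced with ``It is easy to obtain the following result''), so there is nothing to compare against; your route---using $d^{*}\geq 3$ in the forms (i) distinct $5^{-}$-cycles are vertex-disjoint and (ii) a vertex within distance $2$ of a $5^{-}$-cycle lies on no other $5^{-}$-cycle, plus chordlessness of $C_{1}$ and $C_{2}$---is clearly the intended one. Your second half is fine: once normal adjacency is known, any extra edge between $V(C_{1})\setminus\{x,y\}$ and $V(C_{2})\setminus\{x,y\}$ is a chord of the cycle $B$ of length $|C_{2}|+4\leq 9$, hence closes a cycle of length at most $\lfloor(|C_{2}|+4)/2\rfloor+1\leq 5$ through a vertex of $C_{2}$ and distinct from $C_{2}$, contradicting (i); and the ``moreover'' clause follows from (ii) exactly as you say.

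The genuine gap is the sub-case you flag but do not close. It occurs already for $|C_{2}|=4$ (not only $|C_{2}|=5$): with $C_{1}=xyz_{1}z_{2}z_{3}z_{4}$ and, say, $C_{2}=xyz_{1}u$ with $u\notin V(C_{1})$, there is no chord and the only new cycle in $C_{1}\cup C_{2}$ is the $6$-cycle $z_{1}z_{2}z_{3}z_{4}xu$ (a $7$-cycle when $|C_{2}|=5$), so (i) yields nothing; here one must argue that since both $f_{1}$ and $f_{2}$ have $x,y,z_{1}$ consecutive on their boundary cycles, planarity forces $d(y)=2$. But your stated conclusion---``this forces certain degree-$2$ vertices onto $C_{0}$ via \autoref{mindeg}''---is not yet a contradiction: a $2$-vertex on $C_{0}$ is perfectly legal in the minimal counterexample (the discharging even discusses consecutive $2$-vertices on the boundary). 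To finish you must add that a $2$-vertex is incident with exactly two faces, so one of $f_{1},f_{2}$ would have to be the outer face $D$, which is excluded in the setting where the lemma is applied (in its only use, in \autoref{34Chord}, both faces are internal); as literally stated, with $f_{2}=D$ allowed, the configuration is not obviously impossible, so the lemma needs this tacit restriction and your proof needs this last step spelled out. Separately, the opening ``standard preliminary reduction lets us assume $G$ is $2$-connected'' is not available in the paper and is not needed: you only require that the boundary walks of $f_{1}$ and $f_{2}$ are cycles, which can be argued directly from \autoref{mindeg} (a repeated vertex in a boundary walk of length at most $6$ would force a $1$- or $2$-vertex in a forbidden position, or two $5^{-}$-cycles at distance $0$).
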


A \emph{barrier} is defined as either an $F$-cycle or a splitting $F$-path. 

\begin{lemma}\label{34Chord}
An internal $(3, 3, 3, 3, 3, 4)$-face cannot share a $(3, 4)$-edge with an internal $5^{-}$-face, in which all except the $4$-vertex are $3$-vertices.  
\end{lemma}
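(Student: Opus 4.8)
We argue by contradiction, in the style of the preceding reducibility lemmas. Suppose $G$ contains an internal $(3,3,3,3,3,4)$-face $f_{1}=[u_{1}u_{2}u_{3}u_{4}u_{5}u_{6}]$ with $d(u_{1})=4$ and $d(u_{2})=\cdots=d(u_{6})=3$, sharing the $(3,4)$-edge $u_{1}u_{2}$ with an internal $5^{-}$-face $f_{2}$ whose vertices are all $3$-vertices except $u_{1}$. By \autoref{normally}, $f_{1}$ and $f_{2}$ are normally adjacent, $u_{1}u_{2}$ is the only chord of the cycle $C:=G[V(f_{1})\cup V(f_{2})]$, and none of $u_{3},u_{4},u_{5},u_{6}$ lies on a $5^{-}$-cycle. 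Write $V(f_{2})=\{u_{1},u_{2},w_{1},\dots,w_{k}\}$ with $1\le k\le 3$ and $u_{2}w_{1},\,w_{i}w_{i+1},\,w_{k}u_{1}\in E(G)$, so $|V(C)|=6+k\le 9$. Since $f_{1},f_{2}$ are internal faces, every vertex of $C$ is internal; each of $u_{3},u_{4},u_{5},u_{6},w_{1},\dots,w_{k}$ has exactly one neighbour off $C$, whereas $u_{2}$ has all three of its neighbours $u_{1},u_{3},w_{1}$ on $C$ and no neighbour off it. A short argument using $d^{*}\ge 3$ (and \autoref{normally}) also gives that the off-$C$ neighbours of consecutive vertices of $C$ are pairwise distinct and that every face meeting $C$ from outside, other than $f_{2}$, is a $6^{+}$-face.

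The plan is to delete $W:=V(C)$ and recolour. The graph $G-W$ is smaller, still has all its $5^{-}$-cycles pairwise at distance $\ge 3$, and contains $C_{0}$ (a vertex set disjoint from $W$) untouched as a chordless good $9^{-}$-cycle bounding the outer face; so by minimality $\phi_{0}$ extends to a super \textit{IF}-coloring $\phi$ of $(G-W,C_{0})$. Now $C-u_{2}$ is a path $Q$ from $u_{3}$ to $w_{1}$ through $u_{4},u_{5},u_{6},u_{1},w_{k},\dots,w_{2}$, and $u_{2}$ is adjacent to precisely the two endpoints of $Q$ and to its interior vertex $u_{1}$. I would colour $Q$ from $u_{3}$ to $w_{1}$, and colour $u_{2}$ last. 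Every vertex of $Q$ has at most two already-coloured neighbours when it is coloured, so by \autoref{prop} nicely colouring them creates no new $F$-cycle and no new splitting $F$-path of $C_{0}$; I also keep the freedoms to colour a vertex $F$ when the nice rule allows $I$ (still safe, since a new barrier through a vertex with at most two coloured neighbours needs both of its coloured-neighbour edges) and to colour an endpoint of $Q$ with $I$ whenever its one coloured neighbour is not $I$.

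Everything reduces to the last vertex $u_{2}$, with coloured neighbours $u_{1},u_{3},w_{1}$. If none of them is $I$, colour $u_{2}$ with $I$; since $u_{2}\in V(f_{1})\cap V(f_{2})$ and $u_{2}$ then lies on no $F$-cycle or $F$-path, this yields a super \textit{IF}-coloring of $(G,C_{0})$, a contradiction. Otherwise $u_{2}$ receives $F$, and — as $u_{2}$ has no off-$C$ neighbour — any new barrier is an $F$-cycle or a splitting $F$-path of $C_{0}$ through $u_{2}$ using two of the edges $u_{1}u_{2},u_{3}u_{2},w_{1}u_{2}$, each joining $u_{2}$ to an $F$-coloured neighbour. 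So if we can arrange for two of $u_{1},u_{3},w_{1}$ to receive $I$ we are immediately done; using the colouring freedoms on $Q$ this succeeds unless an off-$C$ neighbour of $u_{3}$ or $w_{1}$, or of $u_{6}$, already received $I$ under $\phi$. In the remaining cases the surviving barriers are (a) the cycles $f_{1}$, $f_{2}$, $C$ themselves, and (b) an \emph{escape} barrier that enters $u_{2}$ along a live edge, leaves $C$ through an off-$C$ neighbour of $u_{3}$, $w_{1}$ or $u_{1}$, and returns. Type (a) is handled by steering an $I$ onto $u_{1}$, or failing that onto $V(f_{1})\setminus\{u_{1},u_{2}\}$ and onto $V(f_{2})\setminus\{u_{1},u_{2}\}$; the only way this can fail is that every off-$C$ neighbour of $u_{3},\dots,u_{6},w_{1},\dots,w_{k}$ received $I$ and $u_{1}$ received $F$, which forces all of $u_{3},\dots,u_{6},w_{1},\dots,w_{k}$ to $F$ and hence lets us colour $u_{2}$ with $I$ after all.

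The main obstacle is type (b). Here $d^{*}\ge 3$ forces any escape $F$-cycle through $u_{2}$ to have length at least $7$, and I would show that such a cycle is either a separating good $9^{-}$-cycle, which is impossible by \autoref{SEP}, or restricts to an $F$-cycle already present in $(G-W,C_{0})$, contradicting that $\phi$ is super; an escape splitting $F$-path of $C_{0}$ is dealt with the same way. Using $d^{*}\ge 3$, \autoref{SEP}, and the $6^{+}$-faces around $C$ should cut the escape barriers down to a bounded list of shapes, each killed outright or after locally recolouring one of $u_{1}$, $u_{6}$, $w_{1}$ before $u_{2}$ is coloured. Carrying out this final case analysis so that no escape barrier survives in any branch is the step I expect to be the crux.
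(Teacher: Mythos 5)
Your reduction is the same as the paper's: delete all of $V(f_{1})\cup V(f_{2})$, extend $\phi_{0}$ by minimality to a super \textit{IF}-coloring of the smaller graph, nicely color the path around the two faces starting from one neighbor of the degree-$3$ end of the shared edge and ending at the other, and treat that degree-$3$ vertex ($u_{2}$ in your notation, $v_{t}$ in the paper's) last. The problem is that everything after this set-up --- which is where the lemma actually gets proved --- is missing. You yourself flag it: ``Carrying out this final case analysis so that no escape barrier survives in any branch is the step I expect to be the crux.'' The paper's proof consists precisely of that analysis: after nicely coloring $v_{1},\dots,v_{t-1}$, the only hard case is when exactly one of $v_{1},v_{5},v_{t-1}$ is $I$; the nice-coloring discipline then pins down the colors of the outside neighbors $u_{i}$ (e.g.\ in its Case 1 it deduces $u_{1}=\dots=u_{5}=I$ and $u_{t-1}=v_{t-2}=F$), and explicit local recolorings (swap $v_{t-1},v_{t}$, and, when a barrier survives, additionally swap $v_{6},v_{7}$ after showing $t=9$, $r=7$ using the size of the $5^{-}$-face; in Case 2 a four-way subcase analysis on $u_{2},u_{3},u_{4}$) are verified one by one to destroy all barriers without creating new ones. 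None of this, or any equivalent of it, appears in your proposal.

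Beyond incompleteness, two of the tools you propose for the missing step do not work. First, the ``freedom to colour a vertex $F$ when the nice rule allows $I$'' voids \autoref{prop}: the whole point of nice coloring is that whenever a path vertex $u_{i+1}$ receives $F$, one of $u_{i}$, $v_{i+1}$ is $I$, so no $F$-path of pattern $u_{i}u_{i+1}v_{i+1}$ can escape through an off-configuration neighbor. If you give $F$ where $I$ was available, both those neighbors may be $F$ and you create exactly such an escape route; your justification (``a new barrier \dots\ needs both of its coloured-neighbour edges'') only restates that the barrier would use those two edges, it does not rule the barrier out, and it ignores that a third neighbor of the vertex gets colored later. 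Second, your plan for ``escape'' barriers via \autoref{SEP} is unsound: an $F$-cycle or splitting $F$-path through $u_{2}$ that leaves the configuration need not be a $9^{-}$-cycle nor separating, so \autoref{SEP} does not apply; and its portion outside $W$ is merely an $F$-path between two off-$C$ vertices of $G-W$, which is not a barrier of $(G-W,C_{0})$, so its existence does not contradict $\phi$ being super. The paper never needs either device: it controls escapes solely through \autoref{prop} (strict nice coloring) plus the verified recolorings. As it stands, the proposal is a correct opening move followed by an unproved, and in places unprovable-as-sketched, plan for the essential part.
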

\begin{proof}
Let $f = [v_{1}v_{2}\dots v_{5}v_{t}]$ be an internal $(3, 3, 3, 3, 3, 4)$-face, and let $[v_{5}v_{6}\dots v_{t}]$ be an internal $(3, 3, \dots, 4)$-face where $d(v_{5}) = 4$. By \autoref{normally}, the subgraph induced by $v_{1}, v_{2}, \dots, v_{t}$ is a cycle with a unique chord $v_{5}v_{t}$. For $1 \leq i \leq t - 1$, let $u_{i}$ be the remaining neighbor of $v_{i}$.  By the minimality of $G$, we can extend $\phi_{0}$ to a super \textit{IF}-coloring of $G - \{v_{1}, v_{2}, \dots, v_{t}\}$. First, we nicely color the sequence $v_{1}, v_{2}, \dots, v_{t-1}$, and denoting the resulting coloring of $G - v_{t}$ by $\phi$. Note that $\phi$ is a super \textit{IF}-coloring of $G - v_{t}$ by \autoref{prop}. If $v_{1}, v_{5}$ and $v_{t-1}$ are all colored with $F$, then we color $v_{t}$ with $I$. If at most one of $v_{1}, v_{5}$ and $v_{t-1}$ is colored with $F$, then we color $v_{t}$ with $F$. So we may assume that exactly one of $v_{1}, v_{5}$ and $v_{t}$ is colored with $I$ and the other two are colored with $F$. We then color $v_{t}$ with $F$, and denote the resulting coloring by $\phi'$. If $\phi'$ is a super \textit{IF}-coloring, then the proof is completed. Otherwise, every barrier under $\phi'$ contains the vertex $v_{t}$. Let $Q$ be an arbitrary barrier with respective to $\phi'$. In the following, we will recolor some vertices so that all the barriers are destroyed and no new barriers are created.   

\begin{case}
$\phi(v_{1}) = F$. 
\end{case}
Since $v_{1}$ is nicely colored, we have $\phi(u_{1}) = I$. Thus, the barrier $Q$ pass through $v_{t}v_{1}v_{2}$. By \autoref{prop}, there is no $F$-path with the pattern $v_{i}v_{i+1}u_{i+1}$ for $1 \leq i \leq t-2$. Therefore, $Q$ must lie entirely within the subgraph $G[v_{1}, v_{2}, \dots, v_{t}]$, which implies that $v_{1}v_{2}\dots v_{5}$ is an $F$-path. Recall that $v_{t}$ has a neighbor with color $I$, we have $\phi(v_{t-1}) = I$. Once again, the nicely coloring of $v_{1}v_{2}\dots v_{t-1}$ implies that $\phi(u_{1}) = \phi(u_{2}) = \dots = \phi(u_{5}) = I$ and $\phi(u_{t-1}) = \phi(v_{t-2}) = F$. Now, we revise $\phi'$ by assigning $F$ to $v_{t-1}$ and $I$ to $v_{t}$. Denote the resulting coloring $\psi$. If there is no barrier under $\psi$, then we are done. Hence, there is a barrier passing through $u_{r}v_{r}\dots v_{t-1}u_{t-1}$ under $\psi$. Consequently, $\psi(v_{r-1}) = I$ and $r - 1 \geq 6$. Since $[v_{5}v_{6}\dots v_{t}]$ is a $5^{-}$-face, we have $t = 9$ and $r = 7$. We further revise $\psi$ by assigning $F$ to $v_{6}$ and $I$ to $v_{7}$, obtaining a desired super \textit{IF}-coloring of $G$. 

\begin{case}
$\phi(v_{1}) = I$. 
\end{case}
Then $\phi(v_{5}) = \phi(v_{t-1}) = F$. Since $v_{1}v_{2}\dots v_{t-1}$ is nicely colored, we have $\phi(u_{1}) = F$, and either $\phi(v_{4}) = I$ or $\phi(u_{5}) = I$. 

\begin{itemize}
\item $\phi(u_{2}) = I$. We revise $\phi'$ by letting $\phi'(v_{1}) = F$, $\phi'(v_{t}) = I$. According to \autoref{prop}, it is easy to check that $v_{1}$ cannot be on a barrier after the revision. 

\item $\phi(u_{2}) = F$ and $\phi(u_{3}) = I$. We revise $\phi'$ by letting $\phi'(v_{1}) = F$, $\phi'(v_{t}) = I$ and $\phi(v_{2}) = I$. Note that $\phi(u_{2}) = \phi(v_{3}) = F$. Similarly, $v_{1}$ cannot be on a barrier after the revision. 

\item $\phi(u_{2}) = \phi(u_{3}) = F$ and $\phi(u_{4}) = I$. We revise $\phi'$ by letting $\phi'(v_{1}) = \phi'(v_{3}) = F$, $\phi'(v_{t}) = I$ and $\phi(v_{2}) = I$.  According to \autoref{prop}, it is easy to check that neither $v_{1}$ nor $v_{3}$ is on a barrier after the revision. 

\item $\phi(u_{2}) = \phi(u_{3}) = \phi(u_{4}) = F$. We revise $\phi'$ by letting $\phi'(v_{1}) = \phi'(v_{3}) = F$, $\phi'(v_{t}) = I$ and $\phi(v_{2}) =  \phi(v_{4}) = I$.  Similarly, neither $v_{1}$ nor $v_{3}$ is on a barrier after the revision.  
\end{itemize}

In both main cases, the conclusion is that we are able to find a valid super \textit{IF}-coloring of $G$, contradicting our initial assumption. Therefore, the lemma is proved.
\end{proof}

\begin{lemma}\label{Y}
Assume $[x_{1}x_{2}x_{3}\dots x_{k}]$ is an internal $5^{-}$-face where all vertices are $3$-vertices. Let $[y_{0}y_{1}x_{2}x_{3}y_{4}y_{5}\dots]$ be an adjacent $6^{+}$-face. If $y_{1}$ and $y_{4}$ are internal vertices, then $y_{1}$ or $y_{4}$ is a $4^{+}$-vertex. 
\end{lemma}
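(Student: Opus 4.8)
Suppose, to the contrary, that both $y_{1}$ and $y_{4}$ are $3$-vertices. Being internal, they then have degree exactly $3$ by \autoref{mindeg}, and every $x_{i}$ likewise has degree exactly $3$. Write $f_{1} = [x_{1}x_{2}\dots x_{k}]$ with $3 \le k \le 5$; its boundary is a $k$-cycle, and it meets the $6^{+}$-face $f_{2} = [y_{0}y_{1}x_{2}x_{3}y_{4}y_{5}\dots]$ exactly in the edge $x_{2}x_{3}$. Let $z_{1}$ be the neighbor of $y_{1}$ other than $y_{0}, x_{2}$, let $z_{4}$ be the neighbor of $y_{4}$ other than $x_{3}, y_{5}$, and for $i \notin \{2, 3\}$ let $u_{i}$ be the neighbor of $x_{i}$ off $f_{1}$. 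Using $d^{*} \ge 3$ (which keeps any second $5^{-}$-cycle at distance at least $3$ from $f_{1}$) together with \autoref{mindeg}, one checks the routine non-degeneracies: $y_{1}, y_{4}, x_{1}, \dots, x_{k}$ are pairwise distinct, and none of the $x_{i}$ coincides with $y_{1}$ or $y_{4}$.

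Delete $U \coloneqq \{x_{1}, \dots, x_{k}\}$. Since $f_{1}$ is internal, $V(C_{0}) \cap U = \emptyset$, so $C_{0}$ remains a good $9^{-}$-cycle of the subgraph $G - U$ (which is again planar with its $5^{-}$-cycles at distance at least $3$); by minimality, $\phi_{0}$ extends to a super \textit{IF}-coloring $\phi$ of $(G - U, C_{0})$. It remains to colour the $k$-cycle $f_{1}$, each vertex of which has exactly one coloured neighbor outside $f_{1}$ ($y_{1}$ for $x_{2}$, $y_{4}$ for $x_{3}$, $u_{i}$ otherwise). First I nicely colour the path $x_{3}, x_{4}, \dots, x_{k}, x_{1}$; at each step the current vertex has at most two already-coloured neighbors, so by \autoref{prop} this creates no barrier and produces no $F$-path consisting of an edge of $f_{1}$ followed, in the direction of traversal, by a pendant edge $x_{i}u_{i}$. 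Then I colour $x_{2}$, whose three coloured neighbors are $x_{1}, x_{3}, y_{1}$: if all three are $F$, colour $x_{2}$ with $I$; if at most one is $F$, colour $x_{2}$ with $F$. In either case no barrier passes through $x_{2}$, since an $F$-cycle or a splitting $F$-path through $x_{2}$ would need two $F$-neighbors of $x_{2}$ (and $x_{2} \notin V(C_{0})$ excludes $x_{2}$ as an endpoint of a splitting $F$-path); this yields a super \textit{IF}-coloring of $(G, C_{0})$ extending $\phi_{0}$, a contradiction.

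Hence exactly two of $x_{1}, x_{3}, y_{1}$ are coloured $F$. Colour $x_{2}$ with $F$ tentatively, obtaining $\phi'$; every barrier of $\phi'$ passes through $x_{2}$ and hence through the two $F$-coloured neighbors of $x_{2}$, which leaves a short list of patterns according to the pair of edges the barrier uses at $x_{2}$. In each pattern I destroy all barriers by recolouring a bounded set of vertices among $x_{1}, \dots, x_{k}, y_{1}, y_{4}$, relying on three facts: every vertex incident to $f_{1}$, together with its neighbors $y_{1}, y_{4}$ on $f_{2}$, has degree exactly $3$, which severely constrains how a barrier can traverse it; the cycle $f_{1}$ has length at most $5$, so the only $F$-cycle that $x_{2}$ could complete by running around $f_{1}$ is $f_{1}$ itself, and when that arises I recolour $x_{2}$ with $I$, which forces $y_{1}$ to be recoloured with $F$ to keep the colour-$I$ class independent (symmetrically through $x_{3}$ and $y_{4}$) --- this is exactly where $d(y_{1}) = d(y_{4}) = 3$ is used; and the pendant neighbors $u_{i}, y_{0}, y_{5}, z_{1}, z_{4}$ are already coloured under $\phi$, so each recolouring need only be checked against local incidences, the residual situation in which an outer neighbor of $y_{1}$ or $y_{4}$ is also $F$ being settled by one further recolouring together with the scarcity of $5^{-}$-cycles around $y_{0}, z_{1}, y_{5}, z_{4}$. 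In every pattern we reach a super \textit{IF}-coloring of $(G, C_{0})$ extending $\phi_{0}$, contradicting the choice of $(G, C_{0}, \phi_{0})$; therefore $y_{1}$ or $y_{4}$ is a $4^{+}$-vertex.

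The step I expect to be the main obstacle is this last repair: keeping faithful track of where the colours $I$ and $F$ sit among the roughly $2k+2$ vertices surrounding $f_{1}$ and $f_{2}$ in each barrier pattern, and verifying that the prescribed local recolouring removes the barrier without creating a new $F$-cycle or splitting $F$-path. It is a somewhat more delicate relative of the case analysis in the proof of \autoref{34Chord}, and the hypothesis that $y_{1}$ and $y_{4}$ are both $3$-vertices is precisely what makes every barrier closable.
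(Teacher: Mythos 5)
There is a genuine gap, and it sits exactly where you flag it: the ``repair'' of the remaining case (exactly two of $x_{1}, x_{3}, y_{1}$ colored $F$) is only sketched, and the plain deletion approach you use does not give you the tools to carry it out. After you delete only $U=\{x_{1},\dots,x_{k}\}$ and extend by minimality, the colors of the pendant neighbors $u_{1},\dots$, of $y_{0}$, $z_{1}$, $y_{5}$, $z_{4}$, and of $y_{1},y_{4}$ themselves are completely uncorrelated, and a barrier through $x_{2}$ need not stay near the face: for instance, with $x_{1},y_{1}$ colored $F$ and $x_{3}$ colored $I$, an $F$-path of $G-U$ from $u_{1}$ to $y_{1}$ closes into an $F$-cycle through $u_{1}x_{1}x_{2}y_{1}$ once you set $x_{2}=F$; you cannot recolor $x_{2}$ to $I$ (its neighbor $x_{3}$ is $I$), you cannot recolor $x_{1}$ to $I$ (it was nicely colored $F$, so it has an $I$-neighbor), and recoloring $y_{1}$ --- a vertex whose color was produced by the minimality extension, not chosen by you --- can itself create an $F$-cycle through $y_{0}y_{1}z_{1}$ if $y_{0}$ and $z_{1}$ happen to be $F$ and joined by a long $F$-path far from the face. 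The hypothesis $d^{*}\geq 3$ does nothing to exclude such long $F$-paths, so ``one further recolouring together with the scarcity of $5^{-}$-cycles'' is not an argument; this obstruction is global, and no bounded local case analysis over the $2k+2$ vertices around $f_{1}$ and $f_{2}$ can rule it out.

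The paper's proof is built precisely to neutralize this: it deletes $y_{1}$ and $y_{4}$ along with $x_{1},\dots,x_{k}$ (so those two $3$-vertices remain free to be colored last), and instead of merely deleting it \emph{identifies} $v_{1}$ (the third neighbor of $x_{1}$) with $y_{0}$ --- or, symmetrically, $x_{4}'$ with $y_{5}$ --- in the reduced graph. This forces $\phi(v_{1})=\phi(y_{0})$ in the coloring obtained by minimality, and the decisive step in the extension is that the path $v_{1}x_{1}x_{2}y_{1}y_{0}$ cannot lie on a barrier, because such a barrier would pull back to a barrier of the identified graph $G_{1}$, contradicting that its coloring is super. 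That correlation between the colors of two outside vertices is exactly the control your approach lacks. The identification also carries proof obligations you do not have a substitute for: one must check that $v_{1}$ and $y_{0}$ are at distance at least $6$ in the reduced graph (so no new $5^{-}$-cycles or chords of $C_{0}$ appear), and that $C_{0}$ stays a good cycle in at least one of the two identifications $G_{1},G_{2}$ --- an argument that runs through \autoref{SEP} and the structure of bad $9$-cycles. So your first two coloring cases are fine and match the spirit of \autoref{prop}, but the heart of the lemma is missing; to complete it along the paper's lines you need the identification construction (or some other mechanism that ties together the colors of the relevant outside neighbors) rather than post hoc recoloring of vertices colored by the minimal counterexample.
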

\begin{proof}
Assume, for a contradiction, that both $y_{1}$ and $y_{4}$ are internal $3$-vertices. Let $v_{1}$ be the third neighbor of $x_{1}$. Let $G_{0} = G - \{x_{1}, x_{2}, \dots, x_{k}, y_{1}, y_{4}\}$, and let $G_{1}$ be the graph obtained from $G_{0}$ by identifying $v_{1}$ and $y_{0}$ (note that $\{v_{1}, y_{0}\} \subset V(G_{0})$, and $v_{1} \neq y_{0}$). It is clear that $G_{1}$ is a plane graph. We claim that the distance of $v_{1}$ and $y_{0}$ in $G_{0}$ is at least $6$. Assuming that a path $P$ exists with length at most $5$ between $v_{1}$ and $y_{0}$ in $G_{0}$. Then $C = P \cup v_{1}x_{1}x_{2}y_{1}y_{0}$ is a cycle of length at most $9$. Since $d^{*} \geq 3$, the vertex $y_{1}$ is not on a $5^{-}$-cycle. This implies that the third neighbor of $y_{1}$ is not on $C$. Hence, $C$ is a separating $9^{-}$-cycle that separates $x_{3}$ and the third neighbor of $y_{1}$. Consequently, $C$ is a separating bad $9$-cycle by \autoref{SEP}. Since $x_{2}$ is on the bad cycle $C$ and also on the $5^{-}$-cycle $x_{1}x_{2}x_{3}\dots x_{k}x_{1}$, the bad cycle has a claw as depicted in \autoref{FB1}. This leads to $k = 3$ and $x_{1}x_{2}x_{3}$ is the associated triangle of the bad $9$-cycle $C$. Due to the feature of the bad cycle with a claw, the $6$-face incident with $x_{2}x_{3}$ should contain only one vertex not on $C$, but this contradicts that $x_{3}$ and $y_{4}$ are not on $C$. Hence, the claim is proved. 

By this claim, no new $5^{-}$-cycles are created in $G_{1}$, and the distance between two $5^{-}$-cycles is at least $3$ in $G_{1}$. Furthermore, $C_{0}$ still bounds the outer face of $G_{1}$, has no chords in $G_{1}$, and $\phi_{0}$ is an \textit{IF}-coloring of $C_{0}$. 

Let $G_{2}$ be the graph obtained from $G_{0}$ by identifying $x_{4}'$ and $y_{5}$, where $x_{4}'$ is the third neighbor of $x_{4}$ in $G$. By symmetry, $G_{2}$ satisfies similar properties as $G_{1}$: it is a plane graph, no new $5^{-}$-cycles are created in $G_{2}$, the distance between two $5^{-}$-cycles is at least $3$ in $G_{2}$, the cycle $C_{0}$ bounds the outer face of $G_{2}$, it has no chords in $G_{2}$, and $\phi_{0}$ is an \textit{IF}-coloring of $C_{0}$. Our next step is to show that $C_{0}$ is a good cycle of either $G_{1}$ or $G_{2}$. Assume that $C_{0}$ is a bad $9$-cycle of $G_{1}$. Then a new $6$-cycle is created. Since neither $v_{1}$ nor $y_{0}$ is on a $5^{-}$-cycle of $G$, the identified vertex is on $C_{0}$. Since $d^{*} \geq 3$, it implies that $v_{1}$ is on $C_{0}$ and $y_{0}$ is an internal vertex with a pendent triangle, whose bad vertices are $v_{4}$ and $v_{7}$, as illustrated in \autoref{BAD}. A straightforward check reveals that the identification of $x_{4}'$ and $y_{5}$ maintains $C_{0}$ as a good cycle in $G_{2}$. Otherwise, $x_{4}'$ would be on $C_{0}$ and $y_{5}$ would be an internal vertex with a pendent triangle, whose bad vertices lie in the segment $v_{1}v_{2}v_{3}v_{4}$, contradicting $d^{*} \geq 3$.

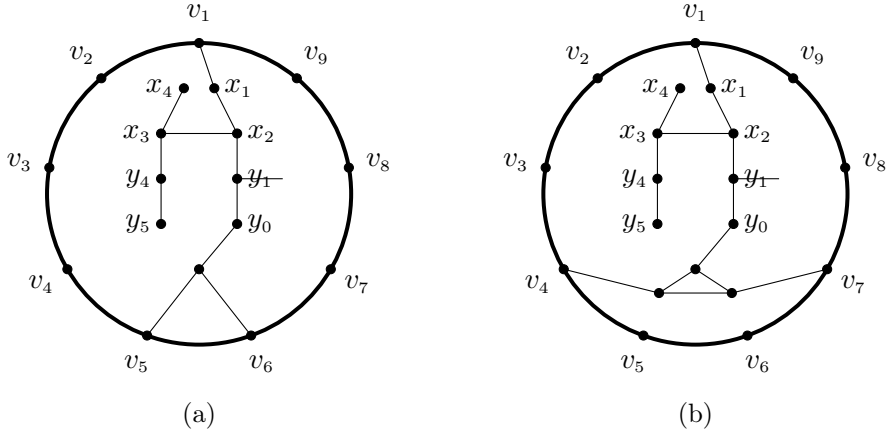
\begin{figure}%
\centering
\def\s{2}
\subcaptionbox{}[0.4\linewidth]{\begin{tikzpicture}
\draw [line width = 1.5pt] circle (\s);
\path 
    (0.1*\s, 0.7*\s) coordinate (x1) node[right]{$x_{\scriptscriptstyle 1}$}
    (-0.1*\s, 0.7*\s) coordinate (x4) node[left]{$x_{\scriptscriptstyle 4}$}
    (0.25*\s, 0.4*\s) coordinate (x2) node[right]{$x_{\scriptscriptstyle 2}$}
    (0.25*\s, 0.1*\s) coordinate (y1) node[right]{$y_{\scriptscriptstyle 1}$}
    (0.25*\s, -0.2*\s) coordinate (y0) node[right]{$y_{\scriptscriptstyle 0}$}
    (-0.25*\s, 0.4*\s) coordinate (x3) node[left]{$x_{\scriptscriptstyle 3}$}
    (-0.25*\s, 0.1*\s) coordinate (y4) node[left]{$y_{\scriptscriptstyle 4}$}
    (-0.25*\s, -0.2*\s) coordinate (y5) node[left]{$y_{\scriptscriptstyle 5}$};
\draw (90:\s)--(x1)--(x2)--(y1)--(y0)--(0, -0.5*\s);

\draw (0, -0.5*\s)--(250:\s);
\draw (0, -0.5*\s)--(290:\s);
\draw (y1)--($(y1)+(0.3*\s, 0)$);
\draw (x2)--(x3)--(y4)--(y5);
\draw (x3)--(x4);

\drawnode{0, -0.5*\s};
\foreach \v in {x1, x2, x3, x4, y0, y1, y4, y5}{\drawnode{\v};}
\foreach \ang in {1, ..., 9}{
\drawnode{\ang*40+50:\s};
\node at (\ang*40+50:1.2*\s) {$v_{\scriptscriptstyle\ang}$};}
\end{tikzpicture}}
\subcaptionbox{}[0.4\linewidth]{\begin{tikzpicture}
\draw [line width = 1.5pt] circle (\s);
\path 
    (0.1*\s, 0.7*\s) coordinate (x1) node[right]{$x_{\scriptscriptstyle 1}$}
    (-0.1*\s, 0.7*\s) coordinate (x4) node[left]{$x_{\scriptscriptstyle 4}$}
    (0.25*\s, 0.4*\s) coordinate (x2) node[right]{$x_{\scriptscriptstyle 2}$}
    (0.25*\s, 0.1*\s) coordinate (y1) node[right]{$y_{\scriptscriptstyle 1}$}
    (0.25*\s, -0.2*\s) coordinate (y0) node[right]{$y_{\scriptscriptstyle 0}$}
    (-0.25*\s, 0.4*\s) coordinate (x3) node[left]{$x_{\scriptscriptstyle 3}$}
    (-0.25*\s, 0.1*\s) coordinate (y4) node[left]{$y_{\scriptscriptstyle 4}$}
    (-0.25*\s, -0.2*\s) coordinate (y5) node[left]{$y_{\scriptscriptstyle 5}$};
\draw (90:\s)--(x1)--(x2)--(y1)--(y0)--(0, -0.5*\s);

\draw (0, -0.5*\s)--(250:0.7*\s)--(210:\s);
\draw (0, -0.5*\s)--(290:0.7*\s)--(330:\s);
\draw (250:0.7*\s)--(290:0.7*\s);
\draw (y1)--($(y1)+(0.3*\s, 0)$);
\draw (x2)--(x3)--(y4)--(y5);
\draw (x3)--(x4);

\drawnode{0, -0.5*\s};
\drawnode{250:0.7*\s};
\drawnode{290:0.7*\s};
\foreach \v in {x1, x2, x3, x4, y0, y1, y4, y5}{\drawnode{\v};}
\foreach \ang in {1, ..., 9}{
\drawnode{\ang*40+50:\s};
\node at (\ang*40+50:1.2*\s) {$v_{\scriptscriptstyle\ang}$};}
\end{tikzpicture}}
\caption{Local structure.}
\label{BAD}
\end{figure}

By symmetry, we may assume that $C_{0}$ is a good cycle in $G_{1}$. By the minimality of $G$, the precoloring $\phi_{0}$ can be extended to a super \textit{IF}-coloring of $(G_{1}, C_{0})$. This coloring corresponds to a super \textit{IF}-coloring of $G_{0}$ by assigning the same color to $v_{1}$ and $y_{0}$ as the new vertex in $G_{1}$. We use $\phi$ to denote this coloring of $G_{0}$.

We consider two cases.
\begin{itemize}
\item $\phi(v_{1}) = \phi(y_{0}) = I$. We color $x_{1}$ and $y_{1}$ with $F$, and then proceed to nicely color the sequence $x_{k}, x_{k-1}, \dots, x_{4}$, $y_{4}$ and $x_{3}$. By \autoref{prop}, the resulting coloring is a super \textit{IF}-coloring. If $x_{3}$ is colored with $F$, then we color $x_{2}$ with $I$; otherwise we color $x_{2}$ with $F$. It is easy to check that the final coloring is a super \textit{IF}-coloring of $G$. 

\item $\phi(v_{1}) = \phi(y_{0}) = F$. We color $y_{1}$ with a color different from the third neighbor of $y_{1}$. Observe that $y_{1}$ is not on a barrier. Next, we color $x_{1}$ with the same color as $y_{1}$. We then nicely color the sequence $x_{k}, x_{k-1}, \dots, x_{4}, y_{4}, x_{3}$. By \autoref{prop}, the resulting coloring has no barriers. If $x_{1}$ and $y_{1}$ are colored with $I$, then we color $x_{2}$ with $F$; otherwise, we color $x_{2}$ with a color different from $x_{3}$. The path $v_{1}x_{1}x_{2}y_{1}y_{0}$ is not on a barrier; otherwise, the identification of $v_{1}$ and $y_{0}$ would create a barrier in $G_{1}$. It is easy to check the final coloring is a super \textit{IF}-coloring of $G$. \qedhere
\end{itemize}
\end{proof}

\begin{lemma}\label{FACE}
Let $[x_{1}x_{6}x_{7}\dots x_{m}]$ be an internal face with $7 \leq m \leq 9$, and let $[x_{1}x_{2}x_{3}x_{4}x_{5}x_{6}]$ be an internal $6$-face. Let $x$ be the third neighbor of $x_{m}$. If $d(x_{i}) = 3$ for each $i \neq 2$, and $x$ is an internal vertex, then $d(x_{2}) \geq 5$.
\end{lemma}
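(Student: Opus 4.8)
The plan is to argue by contradiction in the usual minimal-counterexample framework. Suppose $d(x_2) \leq 4$; then, by \autoref{mindeg} and the hypothesis that every $x_i$ with $i \neq 2$ is a $3$-vertex, the configuration is a $6$-face $[x_1x_2x_3x_4x_5x_6]$ sharing the edge $x_1x_6$ with an internal $m$-face $[x_1x_6x_7\dots x_m]$ for some $7 \leq m \leq 9$, where all the $x_i$ except possibly $x_2$ have degree exactly $3$. Note $x_1$ and $x_6$ each have degree $3$, so by \autoref{normally} these two faces are normally adjacent and $G[\{x_1,\dots,x_m\}]$ is a cycle of length $m+4$ with the unique chord $x_1x_6$; in particular the neighbors $u_i$ of $x_i$ outside this cycle (for $i \neq 2$, $i \notin \{1,6\}$ already accounted) are all distinct and lie outside. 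Write $x$ for the third neighbor of $x_m$ and, for the other $3$-vertices $x_i$ on the $m$-face, let $u_i$ denote the third neighbor. By minimality we extend $\phi_0$ to a super \textit{IF}-coloring of $G - \{x_1,x_2,\dots,x_m\}$.

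Next I would color in a carefully chosen order that respects \autoref{prop}. The key point is that each of the $3$-vertices on the big $m$-face has only two neighbors among the already-colored vertices plus earlier-colored members of the sequence, so nicely coloring them along a path creates no new $F$-cycle, no new splitting $F$-path, and (since they form a path) no $F$-path of the forbidden local pattern. I would first nicely color the path $x_7, x_8, \dots, x_m$ (or the reverse, starting from whichever end is convenient relative to $x$), then nicely color $x_3, x_4, x_5$ coming around the $6$-face, and finally decide the colors of $x_1, x_2, x_6$ by hand. Since $x_1$ and $x_6$ are the two ends of the chord, and $x_2$ is the "free" vertex of degree up to $4$, the analysis splits according to how many of the relevant already-colored neighbors of $x_1$, $x_2$, $x_6$ received color $I$ versus $F$. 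Because $x_2$ may have degree $4$, it has at most two neighbors off the $6$-face; combined with $x_1$ and $x_3$ on the face, that is at most four colored neighbors total when we come to color it, but the crucial observation is that $x_1$ and $x_3$ are themselves colored only at the very end, so we retain enough freedom to avoid a monochromatic $F$-triangle $x_1x_2x_3$ and to break any $F$-path running $x_m x_1 x_2 x_3$ or $x_6 x_1 x_2 x_3$.

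The main obstacle, and the part that will require genuine case analysis rather than appeal to \autoref{prop}, is finishing the coloring on the last three vertices $x_1, x_2, x_6$: these three are precisely the vertices with "extra" adjacencies (the chord $x_1x_6$ and the higher degree of $x_2$), so \autoref{prop} does not directly apply to them, and one must check that every potential barrier — an $F$-cycle through the chord, or a splitting $F$-path of $C_0$ (equivalently of $D$, the outer face) — can be destroyed by a local recoloring without creating a new one. I expect the argument to mirror the structure of the proof of \autoref{34Chord}: assume a barrier survives, deduce it must pass through one of $x_1, x_2, x_6$, use the niceness of the earlier coloring to pin down the colors of the neighbors $u_i$ and of $x$, and then swap colors along a short segment (for instance flipping $x_j$ from $I$ to $F$ and its neighbor from $F$ to $I$) to kill the barrier; the bound $m \leq 9$ is what keeps this finite and is exactly where any would-be long barrier gets cut off. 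If at the end no choice of colors for $x_1, x_2, x_6$ works, we obtain a super \textit{IF}-coloring of $G$ after all — contradicting the choice of $(G, C_0, \phi_0)$ — and hence $d(x_2) \geq 5$.
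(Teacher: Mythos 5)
There is a genuine gap at the decisive step. Your plan is plain deletion: extend $\phi_0$ to $G-\{x_1,\dots,x_m\}$ by minimality, nicely color most of the configuration, and finish $x_1,x_2,x_6$ by a local case analysis ``mirroring \autoref{34Chord},'' killing any surviving barrier by swapping colors along a short segment. The analogy with \autoref{34Chord} breaks down precisely where it matters: in that lemma the one problematic vertex $v_t$ has \emph{all three} neighbors inside the deleted configuration, so \autoref{prop} forces any barrier through it to lie entirely inside $G[v_1,\dots,v_t]$, where all colors are known and a local swap suffices. Here the problematic vertices are attached to the outside: $x_2$ has two external neighbors $x',x''$ and $x_m$ has the external neighbor $x$. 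A barrier can run $x$--$x_m$--$x_1$--$x_2$--$x'$ (or $x''$) and close up through an $F$-path of $G-\{x_1,\dots,x_m\}$ joining $x$ to $x'$, or through two external $F$-paths joining $x$ and $x'$ to $C_0$ (giving a splitting $F$-path). Whether such external $F$-connections exist is global information that the coloring obtained by mere deletion does not control, so no amount of local recoloring of $x_1,x_2,x_6$ (nor the colors of the $u_i$ pinned by niceness) can be argued to destroy all barriers; indeed, avoiding them may force changes back at $x_3,x_4,x_5$ and depends on the unknown external connection pattern. Your remark that the bound $m\leq 9$ ``is exactly where any would-be long barrier gets cut off'' is a misconception: barriers can be arbitrarily long outside the configuration, and $m\leq 9$ does nothing to truncate them.

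The paper supplies exactly the missing control by an identification argument, which is the idea your proposal lacks. First $d(x_2)=4$ is forced by \autoref{Y} (not \autoref{mindeg}). Then one sets $G_0=G-\{x_1,\dots,x_m\}$ and forms $G_1$ by identifying $x$ with a neighbor $x'$ of $x_2$, after checking that their distance in $G_0$ is at least $6$ and that $C_0$ stays chordless, outer, and good (this uses that $x$ is internal and close to the $5^-$-face). Minimality applied to $G_1$ yields a super \textit{IF}-coloring of $G_0$ in which $x$ and $x'$ have the \emph{same} color and, crucially, no $F$-path of $G_0$ joins $x$ to $x'$ and no pair of $F$-paths joins them separately to $C_0$ (either would be a barrier in $G_1$). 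With that guarantee the dangerous chain $x$--$x_m$--$x_1$--$x_2$--$x'$ is harmless, and the extension finishes in two short cases according to the common color of $x$ and $x'$, with \autoref{prop} handling the nicely colored path. If you want to salvage a deletion-only proof you would have to prove, for every possible pattern of external $F$-connections among $x,x',x'',u_3,u_4,u_5,u_7,\dots,u_{m-1}$ and $C_0$, that some coloring of $x_1,\dots,x_m$ avoids all $F$-cycles and splitting $F$-paths; that is a substantially harder (and unaddressed) combinatorial claim, and it is exactly what the identification trick is designed to avoid. Also, a minor slip: $G[\{x_1,\dots,x_m\}]$ is a cycle of length $m$ with the chord $x_1x_6$, not of length $m+4$.
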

\begin{proof}
Suppose for contradiction that $d(x_{2}) \leq 4$. By \autoref{Y}, we must have $d(x_{2}) = 4$. Let $x_{1}, x_{3}, x'$ and $x''$ be the four neighbors of $x_{2}$ in a cyclic order. Let $G_{0} = G - \{x_{1}, x_{2}, \dots, x_{m}\}$, and let $G_{1}$ be obtained from $G_{0}$ by identifying $x$ and $x'$. Using arguments similar to \autoref{Y}, we can prove that the distance between $x$ and $x'$ in $G_{0}$ is at least $6$. Consequently, $C_{0}$ bounds the outer face of $G_{1}$, and has no chords in $G_{1}$.  Since $x$ is an internal vertex and has distance $1$ from the $5^{-}$-face $[x_{1}x_{6}x_{7}\dots x_{m}]$, we have that $C_{0}$ is a good cycle in $G_{1}$. 

By the minimality of $G$, we can extend the precoloring $\phi_{0}$ to a super \textit{IF}-coloring of $G_{1}$, and thereby to a super \textit{IF}-coloring $\phi$ of $G_{0}$, with $x$ and $x'$ having the same color. 

Suppose that $\phi(x) = \phi(x') = I$. We color $x_{m}$ and $x_{2}$ with $F$, and nicely color $x_{3}, x_{4}, x_{5}$, $x_{m-1}, \dots, x_{6}$. The resulting coloring has no barriers. Finally, we color $x_{1}$ with a color different from $x_{6}$. If $x_{6}$ is colored with $F$, then $x_{1}$ is colored with $I$, and the final coloring has no barriers. If $x_{6}$ is colored with $I$, then $x_{1}$ is colored with $F$, which implies that $x_{m}x_{1}x_{2}$ is not contained in a barrier by \autoref{prop}. Therefore, the resulting coloring is a super \textit{IF}-coloring of $G$, a contradiction. 

Suppose that $\phi(x) = \phi(x') = F$. We color $x_{1}$ with $F$, and $x_{2}$ with a color different from $x''$, and then nicely color $x_{3}, x_{4}, \dots, x_{m-1}$. Finally, we color $x_{m}$ with a color different from $x_{m-1}$. It is easy to check that the path $xx_{m}x_{1}$ is not on a barrier. Therefore, the resulting coloring is a super \textit{IF}-coloring of $G$, a contradiction. 
\end{proof}

For convenience, let $F_{0} \coloneqq F(G)\setminus \{D\}$, and $F_{1} \coloneqq \{f \mid f\in F_{0} \textrm{ and } V(f) \cap V(D) \neq\emptyset\}$. We introduce some definitions to facilitate the proof. A \emph{special $3$-face} is an internal $3$-face that has at most one $4^{+}$-vertex. A \emph{special $6$-face} is an internal $6$-face that is adjacent to a special $3$-face. A \emph{bad face} is a $6^{+}$-face in $F_{1}$ that shares an edge with a $5^{-}$-face in $E(C_{0}, G - C_{0})$. A \emph{pendent face} of a vertex $v$ is a face $f$ that does not contain $v$, but has a neighbor of $v$. Conversely, $v$ is a \emph{pendent vertex} of $f$. Let $g$ be an internal $(3, 3, 3, 3, 3, 3)$-face that is adjacent to an internal $3$-face $g'$. We say that the vertex on $g'$ but not on $g$ is a \emph{roof} of $g$, and $g$ is a \emph{base} of $v$. 

We now describe the discharging part. First, we assign an initial charge $\mu(v) = 2d(v) - 6$ to each vertex $v \in V(G)$, $\mu(f) = d(f) - 6$ to each face $f \in F(G)\setminus \{D\}$, and $\mu(D) = d(f) + 6$ to the outer face $D$. By Euler's formula and the handshaking lemma, we have $\sum_{x \in V(G) \cup F(G)} \mu(x) = 0$. We then design some discharging rules to redistribute the charges among vertices and faces so that each element $x$ in $V(G) \cup F(G)$ has a nonnegative final charge $\mu^{*}(x)$, and $\mu^{*}(D) > 0$. This leads a contradiction that \[0 = \sum_{x \in V(G) \cup F(G)} \mu(x) = \sum_{x \in V(G) \cup F(G)} \mu^{*}(x) > 0.\] 

The following are the discharging rules. 
\begin{enumerate}[label = \textbf{R\arabic*.}, ref = R\arabic*]
\item\label{R1} Let $v$ be an internal $4^{+}$-vertex. 
\begin{enumerate}
\item\label{R1a} If $d(v) = 4$ and $v$ is incident with a $3$-face, then $v$ sends $\frac{3}{2}$ to its incident internal $3$-face, and $\frac{1}{2}$ to the base (if it exists). 
\item\label{R1b} If $d(v) \geq 5$ and $v$ is incident with a $3$-face, then $v$ sends $\frac{3}{2}$ to its incident internal $3$-face, and $\frac{1}{2}$ to each incident $6$-face and the base.  
\item\label{R1c} If $v$ is incident with a $4$-face, then $v$ sends $2$ to the $4$-face.
\item\label{R1d} If $v$ is incident with a $5$-face, then $v$ sends $1$ to the $5$-face. 
\item\label{R1e} If $d(v) = 4$ and $v$ has a pendent internal $5^{-}$-face $g$ in which all vertices are $3$-vertices, then $v$ sends $1$ to $g$ and $\frac{1}{2}$ to each incident $6$-face that is not adjacent to $g$.
\item\label{R1f} If $d(v) \geq 5$ and $v$ has a pendent internal $5^{-}$-face $g$ in which all vertices are $3$-vertices, then $v$ sends $2$ to $g$ and $\frac{1}{2}$ to each incident $6$-face that is not adjacent to $g$.
\item\label{R1g} If $v$ has no pendent internal $5^{-}$-face in which all vertices are $3$-vertices, and is not incident with a $5^{-}$-face, then it sends $\frac{1}{2}$ to each incident $6$-face.
\end{enumerate}

\item\label{R2} Every $6^{+}$-face in $F_{0}$ that is not an internal $6$-face sends $1$ to each adjacent $5^{-}$-face other than $D$ and the remaining charge to the outer face.  
\item\label{R3} Each internal $(3, 3, 4^{+})$-face receives $\frac{1}{2}$ from each adjacent internal $6$-face. Each internal $(3, 3, 3)$-face $f$ receives $\frac{1}{2}$ from each adjacent internal $6$-face which contains a non-pendent $4^{+}$-vertex of $f$.

\item\label{R4} The outer face $D$ receives $\mu(v)$ from each boundary vertex $v$, and sends $1$ to each $3$-face in $F_{1}$, and each $6$-face in $F_{1}$ which is adjacent to a $5^{-}$-face. 
\end{enumerate}

By \ref{R4}, every boundary vertex sends its initial charge to the outer face $D$, resulting in a final charge of zero. Therefore, our consideration can be confined to internal vertices. 

Let $v$ be an arbitrary internal vertex. By Lemma~\ref{mindeg}, we have that $d(v)\geq 3$. If $d(v) = 3$, then the vertex is not involved in the discharging process, so $\mu^{*}(v) = \mu(v) = 0$. 

Let $d(v) = 4$ and $f_{1}, f_{2}, f_{3}, f_{4}$ be the four incident faces of $v$ in a cyclic order. 
\begin{enumerate*}[label = (\roman*)]
\item Assume $v$ is incident with a $3$-face $f_{1}$. Since $d^{*} \geq 3$, we have that $v$ is incident with precisely one $5^{-}$-face and has no pendent $5^{-}$-faces. Moreover, $f_{3}$ cannot be an internal $6$-face adjacent to a $5^{-}$-face. Note that every internal $6$-face is adjacent to at most one $5^{-}$-face. By \ref{R1}, $v$ sends at most $\frac{3}{2}$ to $f_{1}$, $\frac{1}{2}$ to the base if it exists, and no charge to $f_{2}, f_{3}$ or $f_{4}$. Thus, $\mu^{*}(v) \geq 2 - \frac{3}{2} - \frac{1}{2} = 0$. 
\item Assume $v$ is incident with a $4$-, or $5$-face $f_{1}$. In this case, it is incident with precisely one $5^{-}$-face and has no pendent $5^{-}$-face. When $f_{1}$ is a $4$-face, $\mu^{*}(v) \geq 2 - 2 = 0$; while if $f_{1}$ is a $5$-face, $\mu^{*}(v) \geq 2 - 1 = 1$.
\item Next, assume $v$ is not incident with a $5^{-}$-face but has a pendent $5^{-}$-face $g$ in which all vertices are $3$-vertices. Since $d^{*} \geq 3$, $v$ has only one pendent $5^{-}$-face. In this case, $v$ sends $1$ to $g$ and $\frac{1}{2}$ to at most two incident faces, so $\mu^{*}(v) \geq 2 - 1 - \frac{1}{2} \times 2 = 0$. 
\item Finally, consider the case that $v$ has no pendent internal $5^{-}$-face in which all vertices are $3$-vertices, and is not incident with a $5^{-}$-face. By \ref{R1g}, $v$ sends at most $\frac{1}{2}$ to each incident face. Then $\mu^{*}(v) \geq 2 - \frac{1}{2} \times 4 = 0$. 
\end{enumerate*}

Let $d(v) = k \geq 5$. If $v$ is incident with a $3$-face $f_{1}$, then it sends at most $\frac{3}{2}$ to $f_{1}$, at most $\frac{1}{2}$ to each other incident face, and $\frac{1}{2}$ to the base if it exists, which implies that $\mu^{*}(v) \geq 2k - 6 - \frac{3}{2} - \frac{1}{2} \times k = \frac{3(k - 5)}{2}\geq 0$ by \ref{R1b}. If $v$ is incident with a $4$- or $5$-face $f_{1}$, then it sends at most $2$ to $f_{1}$, implying $\mu^{*}(v) \geq 2k - 6 - 2 > 0$. If $v$ is not incident with a $5^{-}$-face but has a pendent face $g$ in which all vertices have degree $3$, then $v$ sends $2$ to $g$, and $\frac{1}{2}$ to each incident special $6$-faces that is not adjacent to $g$, so $\mu^{*}(v) \geq 2k - 6 - 2 - \frac{1}{2} \times (k - 2) = \frac{3k-14}{2} > 0$. If $v$ has no pendent $5^{-}$-face in which all vertices have degree $3$, and is not incident with a $5^{-}$-face, then $v$ sends at most $\frac{1}{2}$ to each incident face, so $\mu^{*}(v) \geq 2k - 6 - \frac{1}{2} \times k > 0$. 

For a $7^{+}$-face $f$ in $F_{0}$, the number of incident $5^{-}$-faces is at most $\lfloor\frac{d(f)}{4}\rfloor$. Therefore, by \ref{R2}, we have $\mu^{*}(f) \geq d(f) - 6 - 1 \times \lfloor\frac{d(f)}{4}\rfloor \geq 0$ .  

Consider a $6$-face $f \neq D$. It is known that $f$ is adjacent to at most one $5^{-}$-face. If $f$ is a $6$-face in $F_{1}$ that is adjacent to a $5^{-}$-face other than $D$, then it receives $1$ from the outer face $D$ by \ref{R4}, and sends at most $1$ to the adjacent $5^{-}$-face by \ref{R2}, thus $\mu^{*}(f) \geq 0 + 1 - 1 = 0$. If $f$ is an internal non-special $6$-face or a non-internal $6$-face which is not adjacent to a $5^{-}$-face, then it does not send out charge, so $\mu^{*}(f) \geq \mu(f) = 0$. So we may assume that $f$ is a special $6$-face. Now, assume $f = [v_{1}v_{2}\dots v_{6}]$ and it is adjacent to a special $3$-face $[v_{1}v_{6}v_{7}]$ with $d(v_{1}) \leq d(v_{6})$. Since $[v_{1}v_{6}v_{7}]$ is a special $3$-face, we have $d(v_{1}) = 3$. 

\begin{itemize}
\item $d(v_{6}) \geq 5$. Then $d(v_{7}) = 3$, $f$ receives $\frac{1}{2}$ from $v_{6}$, and sends $\frac{1}{2}$ to the adjacent special $3$-face $[v_{1}v_{6}v_{7}]$. Thus, $\mu^{*}(f) \geq 0 + \frac{1}{2} - \frac{1}{2} = 0$. 

\item $d(v_{6}) = 4$. Then $d(v_{7}) = 3$. By \autoref{34Chord}, $f$ has a $4^{+}$-vertex other than $v_{6}$. If $v_{2}$ or $v_{5}$ is a $4^{+}$-vertex, then it sends $\frac{1}{2}$ to $f$ by \ref{R1g}, so $\mu^{*}(f) \geq 0 + \frac{1}{2} - \frac{1}{2} = 0$. If one of $v_{3}$ and $v_{4}$ is a $4^{+}$-vertex, then it sends $\frac{1}{2}$ to $f$ by \ref{R1e}, \ref{R1f} and \ref{R1g}, so $\mu^{*}(f) \geq 0 + \frac{1}{2} - \frac{1}{2} = 0$. 

\item $d(v_{6}) = 3$ and $d(v_{7}) \geq 4$.  If $f$ is a $(3, 3, 3, 3, 3, 3)$-face, then $f$ receives $\frac{1}{2}$ from its roof $v_{7}$ and sends $\frac{1}{2}$ to $[v_{1}v_{6}v_{7}]$, so $\mu^{*}(f) = 0 + \frac{1}{2} - \frac{1}{2} = 0$. If $f$ is incident with a $4^{+}$-vertex, then $f$ receives at least $\frac{1}{2}$ from each incident $4^{+}$-vertex and sends $\frac{1}{2}$ to $[v_{1}v_{6}v_{7}]$, so $\mu^{*}(f) \geq 0 + \frac{1}{2} - \frac{1}{2} = 0$. 

\item $d(v_{6}) = d(v_{7}) = 3$. Then at least one of $v_{2}$ and $v_{5}$ is a $4^{+}$-vertex by \autoref{Y}. If one of $v_{3}$ and $v_{4}$ is a $4^{+}$-vertex, then each $4^{+}$-vertex in $\{v_{3}, v_{4}\}$ sends $\frac{1}{2}$ to $f$ by \ref{R1g}, so $\mu^{*}(f) \geq 0 + \frac{1}{2} - \frac{1}{2} = 0$. If neither $v_{3}$ nor $v_{4}$ is a $4^{+}$-vertex, then $f$ does not send charge to $[v_{1}v_{6}v_{7}]$, so $\mu^{*}(f) \geq \mu(f) = 0$. 
\end{itemize}

Let $f = [x_{1}x_{2}\dots]$ be a face with $d(f) \in \{4, 5\}$. If $f \in F_{1}$, then $f$ receives $1$ from each adjacent bad face, implying $\mu^{*}(f) \geq \mu(f) + 1 \times 2 \geq 0$. Assume $f$ is an internal face. If $f$ contains a $4^{+}$-vertex and $d(f) = 4$, then $\mu^{*}(f) \geq \mu(f) + 2 = 0$; if $f$ contains a $4^{+}$-vertex and $d(f) = 5$, then $\mu^{*}(f) \geq \mu(f) + 1 = 0$. Then every vertex on $f$ is a $3$-vertex. If none of the adjacent face is an internal $6$-face, then $\mu^{*}(f) \geq \mu(f) + 1 \times d(f) > 0$. Assume $[x_{1}x_{2}y_{3}y_{4}y_{5}y_{6}]$ is an internal $6$-face. By \autoref{Y}, either $y_{3}$ or $y_{6}$ is a $4^{+}$-vertex. Then we may assume that every adjacent face of $f$ is an internal $6$-face; otherwise $f$ receives at least $1$ from $\{y_{3}, y_{6}\}$ by \ref{R1e} and \ref{R1f}, and at least $1$ from adjacent faces by \ref{R2}, implying $\mu^{*}(f) \geq \mu(f) + 1 + 1 \geq 0$. Once again, \autoref{Y} implies that $f$ has at least two pendent $4^{+}$-vertices. This implies that $f$ receives at least $1$ from each pendent $4^{+}$-vertex by \ref{R1e} and \ref{R1f}, and $\mu^{*}(f) \geq \mu(f) + 1 \times 2 \geq 0$.

Next, consider a $3$-face $f = [x'y'z']$, and let $f_{1}, f_{2}, f_{3}$ be the three faces adjacent to $y'z', x'z', x'y'$, respectively. If $f$ has a common vertex with $D$, then it receives $1$ from $D$ by \ref{R4}, and $1$ from each adjacent $7^{+}$-face and bad $6$-face by \ref{R2}, thus $\mu^{*}(f) \geq \mu(f) + 1 + 1 \times 2 = 0$. So we may assume that $f$ is an internal face. If $f$ is incident with at least two $4^{+}$-vertices, then $f$ receives $\frac{3}{2}$ from each incident $4^{+}$-vertex by \ref{R1a} and \ref{R1b}, thus $\mu^{*}(f) \geq \mu(f) + \frac{3}{2} \times 2 = 0$. If $f$ is a $(3, 3, 4^{+})$-face, then $f$ receives $\frac{3}{2}$ from the incident $4^{+}$-vertex, and at least $\frac{1}{2}$ from each adjacent $6^{+}$-face \ref{R2} and \ref{R3},  thus $\mu^{*}(f) \geq \mu(f) + \frac{3}{2} + \frac{1}{2} \times 3 = 0$.

Now, let's assume that $f$ is an internal $(3, 3, 3)$-face. Denote $x, y, z$ as the third neighbors of $x', y', z'$, respectively. Since $d^{*} \geq 3$, we have that $x, y$ and $z$ are distinct. If none of the adjacent faces is an internal $6$-face, then $f$ receives $1$ from each adjacent face, leading to $\mu^{*}(f) = \mu(f) + 1 \times 3 = 0$. Assume that $f$ is adjacent to an internal $6$-face $f_{1} = [yy'z'zpq]$. By \autoref{Y}, either $y$ or $z$, say $y$, is a $4^{+}$-vertex. If neither $f_{2}$ nor $f_{3}$ is an internal $6$-face, then $\mu^{*}(f) \geq \mu(f) + 1 + 1 \times 2 = 0$. Hence, we can assume that $f_{2}$ or $f_{3}$ is an internal $6$-face. This requires $x$ to be an internal vertex. By \autoref{Y}, one of $x$ and $z$ is a $4^{+}$-vertex. Knowing that at least two of $x, y$ and $z$ are internal $4^{+}$-vertices, and if any one of them is a $5^{+}$-vertex, then $\mu^{*}(f) \geq \mu(f) + 1 + 2 = 0$. We can then safely assume that $x, y$ and $z$ are all $4^{-}$-vertices, with $d(y) = 4$ specifically. If $d(x) = d(z) = 4$, then $f$ receives $1$ from each of $x, y$ and $z$, resulting in $\mu^{*}(f) \geq \mu(f) + 1 \times 3 = 0$. So we need to consider the case that two of $x, y$ and $z$ are $4$-vertices and the other one is a $3$-vertex. If one of $f_{2}$ and $f_{3}$ is not an internal $6$-face, then $f$ receives $1$ from $f_{2}$ or $f_{3}$, which implies that $\mu^{*}(f) \geq \mu(f) + 1 \times 2 + 1 = 0$. Hence, we assume that $f_{1}, f_{2}$ and $f_{3}$ are all internal $6$-faces. By symmetry, assume $d(z) = 3$ and $d(x) = 4$. By \autoref{FACE}, each of $f_{1}$ and $f_{2}$ is incident with a $4^{+}$-vertex such that the $4^{+}$-vertex is not a pendent vertex of $f$. By \ref{R1e} and \ref{R3}, $f$ receives $\frac{1}{2}$ from each of $f_{1}$ and $f_{2}$. Hence, $\mu^{*}(f) \geq \mu(f) + 1 \times 2 + \frac{1}{2} \times 2 = 0$. 


Let $\tau_{3}$ denote the number of $3$-faces in $F_{1}$, $\tau_{6}$ denote the number of $6$-faces in $F_{1}$ that are adjacent to a $5^{-}$-face in $F_{0}$, $e'$ denote the number of edges in $E(C_{0}, G - C_{0})$ that are not on any 3-face, and $p$ denote the total charge that $D$ received from other faces by \ref{R2}. 
\[
\begin{aligned}
\mu^{*}(D) & = d(D) + 6 + \sum_{v \in C_{0}}(2d(v) - 6) - \tau_{3} - \tau_{6} + p\\
&= d(D) + 6 + \sum_{v \in C_{0}}(2d(v) - 4)  - 2d(D) - \tau_{3} - \tau_{6} + p\\
&= 6 - d(D) + 2|E(C_{0}, G - C_{0})| - \tau_{3} - \tau_{6} + p\\
&= 6 - d(D) + 2(2\tau_{3} + e')  - \tau_{3} - \tau_{6} + p\\
& = 6 - d(D) + 3\tau_{3} + 2e' - \tau_{6} + p.
\end{aligned}
\]

Our goal is to prove that $\mu^{*}(D) > 0$. Suppose otherwise that $\mu^{*}(D) \leq 0$. Then 
\begin{equation}
6 - d(D) + 3\tau_{3} + 2e' - \tau_{6} + p \leq \mu^{*}(D) \leq 0.
\end{equation} 
Since $d(D) \leq 9$, we have 
\begin{equation}\label{EQ2}
6 + 3\tau_{3} + 2e' - \tau_{6} + p \leq d(D) \leq 9.
\end{equation}
Observing that every face counted in $\tau_{6}$ has at least one edge which is counted in $e'$, and every edge counted in $e'$ is contained in at most two faces counted in $\tau_{6}$. Therefore, 
\begin{equation}\label{EQ3}
2e' \geq \tau_{6},
\end{equation}
with equality only if every face counted in $\tau_{6}$ contains precisely one edge counted in $e'$, and precisely one edge on a $3$-face in $F_{1}$. By \eqref{EQ2} and \eqref{EQ3}, we have $\tau_{3} \leq 1$. 

First, assume $\tau_{3} = 1$. It follows from \eqref{EQ2} and \eqref{EQ3} that $d(D) = 9$ and $2e' - \tau_{6} = p = 0$. Since $\tau_{3} = 1$, we have $\tau_{6} \leq 2\tau_{3} = 2$. If $2e' = \tau_{6} > 0$, then $2e' = \tau_{6} = 2$, which implies that $C_{0}$ is the bad $9$-cycle with a claw as depicted in \autoref{FB1}, a contradiction. If $2e' = \tau_{6} = 0$, then $E(C_{0}, G - C_{0})$ consists of two edges which are in a $3$-face, thus $G$ has a face $f$ with at least seven consecutive $2$-vertices, which implies $p > 0$, a contradiction.

Next, assume $\tau_{3} = 0$. Then 
\begin{equation}
e' = |E(C_{0}, G - C_{0})| \geq |F_{1}| \geq \tau_{6}.
\end{equation}
Utilizing \eqref{EQ2}, we derive the inequality:
\begin{equation}\label{EQ5}
e' + (e' - \tau_{6}) + p \leq d(D) - 6 \leq 3.
\end{equation}
Hence, $e' = |E(C_{0}, G - C_{0})| \leq 3$. We consider three cases according to the value of $e'$. 

\begin{itemize}
\item If $e' = 1$, then the edge in $E(C_{0}, G - C_{0})$ is a cut edge, and the incident face contains a walk of length $d(D) + 2$ such that no charge is sent through it, implying $\tau_{6} = 0$. By \eqref{EQ5}, $d(D) \geq 8$. However, this leads to $p \geq d(f) - 6 - \lceil \frac{d(f) - (d(D) + 2)}{4}\rceil \geq 2$, contradicting \eqref{EQ5}. 
\item Assume that $e' = 2$. It follows from \eqref{EQ5} that $d(D) \geq 8$. If $e' - \tau_{6} = 0$, then $F_{1}$ consists of two $6$-faces, thus the two edges in $E(C_{0}, G - C_{0})$ have a common endpoint in $\int(C_{0})$ and $|\int(C_{0})| = 1$, which implies that the internal vertex has degree $2$, contradicting \autoref{mindeg}. Therefore, $e' - \tau_{6} \neq 0$. Consequently, $d(D) = 9$ and $p = 0$. However, there exists a $7^{+}$-face $f$ with at least four consecutive 2-vertices, thus $p \geq d(f) - 6 - \lceil \frac{d(f) - 7}{4} \rceil \geq 1$, which contradicts the conclusion $p = 0$. 
\item If $e' = 3$,  following \eqref{EQ5}, we deduce that $d(D) = 9$, $\tau_{6} = e' = 3$ and $p = 0$. As a result, $C_{0}$ is the bad $9$-cycle with a triclaw as depicted in \autoref{FB2}, a contradiction. 
\end{itemize}

\end{document}